\title[Kakeya inequality in the Heisenberg group]{Kakeya maximal inequality in the Heisenberg group}
\author[K. F\"assler]{Katrin F\"assler}
\address{Department of Mathematics and Statistics\\ University of Jyv\"askyl\"a \\ P.O. Box 35 (MaD),
FI-40014 University of Jyv\"askyl\"a, Finland}
\email{katrin.s.fassler@jyu.fi}
\author[A. Pinamonti]{Andrea Pinamonti}
\address{Department of Mathematics\\ University of Trento\\ Via Sommarive, 14 \\ IT-38123 Povo, Italy}
\email{andrea.pinamonti@unitn.it}
\author[P. Wald]{Pietro Wald}
\address{Mathematics Institute\\
University of Warwick\\ Zeeman Building\\
Coventry CV4 7AL, UK} \email{pietro.wald@warwick.ac.uk}
\thanks{K.F.\ was supported by the
Academy of Finland grants 321696, 328846.}
\thanks{A.P.\ was supported by the INdAM- GNAMPA Project 2022 Problemi al bordo e applicazioni geometriche, codice CUP\_E55\-F22\-00\-02\-70\-001. }
\thanks{P.W.\ was supported by EPSRC DTP during the later stage of this work.}
\date{\today}
\subjclass[2020]{(Primary) 28A80  (Secondary) 28A78, 42B25, 43A80}
\keywords{Kakeya inequality, Heisenberg groups, maximal functions}
\newcommand{\R}{\mathbb{R}}
\newcommand{\Z}{\mathbb{Z}}
\newcommand{\calT}{\mathcal{T}}
\newcommand{\calH}{\mathcal{H}}
\newcommand{\calB}{\mathcal{B}}
\newcommand{\spa}{\operatorname{span}}
\newcommand{\card}{\operatorname{card}}
\newcommand{\dist}{\operatorname{dist}}
\newcommand{\Hei}{{\mathbb{H}}^{1}}
\def\Barint_#1{\mathchoice
          {\mathop{\vrule width 6pt height 3 pt depth -2.5pt
                  \kern -8pt \intop}\nolimits_{#1}}%
          {\mathop{\vrule width 5pt height 3 pt depth -2.6pt
                  \kern -6pt \intop}\nolimits_{#1}}%
          {\mathop{\vrule width 5pt height 3 pt depth -2.6pt
                  \kern -6pt \intop}\nolimits_{#1}}%
          {\mathop{\vrule width 5pt height 3 pt depth -2.6pt
                  \kern -6pt \intop}\nolimits_{#1}}}
\DeclarePairedDelimiter\abs{\lvert}{\rvert}
\DeclarePairedDelimiter\norm{\lVert}{\rVert}
\numberwithin{equation}{section}
\theoremstyle{plain}
\newtheorem{thm}[equation]{Theorem}
\newtheorem{lemma}[equation]{Lemma}
\newtheorem{cor}[equation]{Corollary}
\newtheorem{proposition}[equation]{Proposition}
\theoremstyle{definition}
\newtheorem{definition}[equation]{Definition}
\theoremstyle{remark}
\newtheorem{remark}[equation]{Remark}
\begin{document}

\begin{abstract}
We define the Heisenberg Kakeya maximal functions $M_{\delta}f$,
$0<\delta<1$, by averaging over $\delta$-neighborhoods of
horizontal unit line segments in the Heisenberg group
$\mathbb{H}^1$ equipped with the Kor\'{a}nyi distance
$d_{\mathbb{H}}$. We show that
\begin{displaymath}
\|M_{\delta}f\|_{L^3(S^1)}\leq
C(\varepsilon)\delta^{-1/3-\varepsilon}\|f\|_{L^3(\mathbb{H}^1)},\quad
f\in L^3(\mathbb{H}^1),
\end{displaymath}
for all $\varepsilon>0$. The proof is based on a recent variant,
due to Pramanik, Yang, and Zahl, of Wolff's circular maximal
function theorem for a class of planar curves related to Sogge's
cinematic curvature condition. As an application of our Kakeya
maximal inequality, we recover the sharp lower bound for the
Hausdorff dimension of Heisenberg Kakeya sets of horizontal unit
line segments in $(\mathbb{H}^1,d_{\mathbb{H}})$, first proven by
Liu.
\end{abstract}

\maketitle

\section{Introduction}\label{s:Intro}
\noindent This paper concerns the Heisenberg group
$\mathbb{H}^1=(\mathbb{R}^3,\cdot)$ with the product
\begin{displaymath}
(x_1,x_2,x_3) \cdot (x_1',x_2',x_3') =
\Big(x_1+x_1',x_2+x_2',x_3+x_3'+\tfrac{1}{2}[x_1 x_2'-x_2
x_1']\Big)
\end{displaymath}
and the \emph{Kor\'{a}nyi metric}
$d_{\mathbb{H}}(x,y)=\|y^{-1}\cdot x\|_{\mathbb{H}}$, where $
\|x\|_{\mathbb{H}}=\left((x_1^2+x_2^2)^2+ 16 x_3^2\right)^{1/4}$.
We introduce the \emph{Heisenberg Kakeya maximal function}
$M_{\delta}f:S^1 \to [0,\infty]$,
\begin{equation}\label{eq:MaxOpDef}
M_{\delta}f(e)= \sup_{y\in \mathbb{H}^1}
\frac{1}{|T_{\delta}(y,e)|} \int_{T_{\delta}(y,e)} |f|,\quad e\in
S^1.
\end{equation}
Here $T_{\delta}(y,e)$ is the \emph{Heisenberg $\delta$-tube} of
length $1$ at $y$ in direction $e$, as defined in Definition
\ref{d:HeisTube}, and integration is with respect to Lebesgue
measure on $\mathbb{R}^3$. The coaxial lines of such tubes are
\emph{horizontal lines}  in the sense of Heisenberg geometry, or,
equivalently, lines in $\mathcal{L}_{SL(2)}$ in the terminology of
\cite{2022arXiv221009581W,2022arXiv221009955F,2022arXiv221105194K}.
We use a $\delta$-incidence result for arcs of parabolas in
$\mathbb{R}^2$ to deduce information about Heisenberg Kakeya
maximal functions. More precisely, we apply a special case of
Pramanik, Yang, and Zahl's recent generalization
\cite{2022arXiv220702259P} of Wolff's circular maximal function
bound \cite{MR1473067} to prove the following result.

\begin{restatable}[Heisenberg Kakeya maximal function inequality]{thm}{MainIntro}
\label{t:MainIntro} For all $\varepsilon>0$, $\delta \in (0,1)$,
\begin{displaymath}
\|M_{\delta}f\|_{L^3(S^1)} \lesssim_{\varepsilon}
\delta^{-1/3-\varepsilon}\,\|f\|_{L^3(\mathbb{H}^1)},\quad f\in
L^3(\mathbb{H}^1).
\end{displaymath}
\end{restatable}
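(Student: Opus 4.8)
The plan is to deduce the theorem from a planar $\delta$-incidence estimate for arcs of parabolas by a duality argument, after making explicit the incidence geometry between points of $\mathbb{H}^{1}$ and Heisenberg $\delta$-tubes. First I would linearize $M_{\delta}$: for each direction $e=(\cos\theta,\sin\theta)$ pick, by a measurable selection, a near-optimal tube $T(\theta)=T_{\delta}(y(\theta),e)$, so that $M_{\delta}f(e)\lesssim |T(\theta)|^{-1}\int_{T(\theta)}|f|$, where by the Korányi scaling $|T(\theta)|\sim\delta^{3}$ (in coordinates adapted to the line the tube is a box of dimensions $1\times\delta\times\delta^{2}$). Testing $\|M_{\delta}f\|_{L^{3}(S^{1})}$ against a nonnegative $g$ with $\|g\|_{L^{3/2}(S^{1})}=1$ and applying Fubini, the theorem reduces to the dual bound
\[
\Big\|\,\delta^{-3}\!\int_{S^{1}} g(\theta)\,\1_{T(\theta)}(\cdot)\,d\theta\,\Big\|_{L^{3/2}(\mathbb{H}^{1})}\lesssim_{\varepsilon}\delta^{-1/3-\varepsilon},
\]
required uniformly over all such $g$ and all measurable selections $\theta\mapsto y(\theta)$; I write $F$ for the function inside the norm, which records the $g$-weighted overlap of the chosen tubes.

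Next I would parametrize a horizontal line in direction $\theta$ by its signed planar distance $d$ from the origin together with a height $h$, and compute from the group law when a point $x=(x_{1},x_{2},x_{3})$, written $x_{1}=\rho\cos\alpha$, $x_{2}=\rho\sin\alpha$, lies in $T_{\theta,d,h}$. To the relevant precision, and after using the transverse condition to slave $d$, this becomes the pair
\[
|d-\rho\sin(\theta-\alpha)|\lesssim\delta,\qquad \big|x_{3}-h-\tfrac{\rho^{2}}{4}\sin 2(\theta-\alpha)\big|\lesssim\delta^{2},
\]
together with the length localization $|\theta-\alpha|\lesssim1$. The first, $\delta$-scale condition says that the graph of the selected $\theta\mapsto d(\theta)$ stays $\delta$-close to the curve $\theta\mapsto\rho\sin(\theta-\alpha)$; restricted to a bounded arc and for $\rho$ in a fixed dyadic range, these are smooth curves of nonvanishing curvature and, up to an affine normalization, arcs of parabolas satisfying Sogge's cinematic curvature condition — precisely the family to which \cite{2022arXiv220702259P} applies. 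The second, $\delta^{2}$-scale condition encodes the contact structure of $\mathbb{H}^{1}$: differentiating the first relation gives $\tfrac{\rho^{2}}{4}\sin 2(\theta-\alpha)\approx\tfrac12 d(\theta)d'(\theta)$, so the height is essentially slaved to $d$ and only the vertical offset $x_{3}$ remains free.

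Then I would estimate $\|F\|_{3/2}^{3/2}=\int F^{3/2}$ by integrating in $x$ via $\rho\,d\rho\,d\alpha\,dx_{3}$ and freezing $\rho$ in dyadic ranges. For fixed $\rho$ the first condition turns the $\theta$-integral defining $F$ into a $g$-weighted count of incidences with $\delta$-neighborhoods of the parabola arcs above, which is controlled in $L^{3}$ over the curve parameters by the special case of Wolff's theorem in \cite{2022arXiv220702259P}, at the cost of only the factor $\delta^{-\varepsilon}$; the second, $\delta^{2}$-scale condition then localizes $x_{3}$ and is the mechanism that limits the tube overlap, with the bush configuration (all tubes focusing at one point) as the extremal case. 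Carrying out the $x_{3}$- and $\rho$-integrations against these gains is what produces the remaining polynomial factor $\delta^{-1/3}$.

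I expect the crux to be this last step: reconciling the anisotropic Korányi scaling — horizontal width $\delta$ against vertical width $\delta^{2}$ — with the single scale $\delta$ in which the planar parabola estimate is phrased, and organizing the resulting multi-scale bookkeeping (the $\delta^{2}$-vertical localization together with the radial integration over $\rho$) so that it contributes \emph{exactly} $\delta^{-1/3}$ rather than a weaker power. A secondary technical point is to verify that the arcs $\theta\mapsto\rho\sin(\theta-\alpha)$ meet the cinematic curvature hypotheses of \cite{2022arXiv220702259P} uniformly across the dyadic ranges of $\rho$ and the admissible arcs, so that the $\delta^{-\varepsilon}$ losses do not accumulate into a larger power of $\delta$.
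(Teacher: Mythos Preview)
Your overall strategy --- linearize, dualize, and feed the resulting overlap function into a planar cinematic-curvature estimate from \cite{2022arXiv220702259P} --- is the same as the paper's. But the implementation you sketch diverges from the paper at the crucial geometric step, and the part you label ``the crux'' is precisely the part the paper handles by a different, cleaner mechanism that you are missing.

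The paper does \emph{not} work in polar coordinates in the horizontal plane, nor does it try to apply the PYZ estimate at scale $\delta$ to the family $\theta\mapsto\rho\sin(\theta-\alpha)$ and then reconcile this afterwards with a separate $\delta^{2}$-height localization. Instead it uses the \emph{vertical Heisenberg projection}
\[
\pi_{\mathbb{W}}(x_{1},x_{2},x_{3})=\big(0,x_{2},x_{3}+\tfrac{1}{2}x_{1}x_{2}\big)
\]
onto the plane $\mathbb{W}=\{x_{1}=0\}$. Under $\pi_{\mathbb{W}}$, a Heisenberg $\delta$-tube in a direction making bounded angle with the $x_{2}$-axis is sent into a Euclidean $\sim\delta^{2}$-neighborhood of an arc of a genuine parabola $s\mapsto\frac{a}{2}s^{2}+bs+c$, where $a=\cot\varphi$. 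Two features make this decisive. First, if the tube directions are $\delta^{2}$-separated in $S^{1}$, then the leading coefficients $a$ are $\gtrsim\delta^{2}$-separated, so the one-dimensional non-concentration hypothesis in PYZ is satisfied \emph{at scale $\delta^{2}$} automatically. Second, the fibers $\pi_{\mathbb{W}}^{-1}(w)=w\cdot\mathbb{L}$ are horizontal lines, and they intersect each tube in a set of length $\lesssim\delta$. A Fubini argument along this splitting $\mathbb{H}^{1}\simeq\mathbb{W}\times\mathbb{L}$ then gives directly
\[
\int_{\mathbb{H}^{1}}\Big(\sum_{T}\chi_{T}\Big)^{3/2}\ \lesssim\ \delta\int_{[0,1]^{2}}\Big(\sum_{T}\chi_{P_{T}}\Big)^{3/2}\ \lesssim_{\varepsilon}\ \delta\cdot\delta^{2-\varepsilon}\mathrm{card}(\mathcal{T}),
\]
with the second inequality being PYZ applied at scale $\delta^{2}$. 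The passage to $\|M_{\delta}\|_{3\to3}\lesssim_{\varepsilon}\delta^{-1/3-\varepsilon}$ is then a standard duality (Propositions~\ref{prop:ti_tubes_implies_KakeyaMax}--\ref{prop:LinKakeya_equiv_KakeyaMax}), with the $\delta^{2}$-discretization of directions justified by the tube-inclusion Lemma~\ref{l:KakeyaMax_small_angle_diff}.

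In your setup, by contrast, the two Kor\'anyi scales are kept separate: PYZ would be invoked at scale $\delta$ for a family of sinusoids indexed by the \emph{spatial} variable $\alpha$ (with $\rho$ frozen), and the $\delta^{2}$ vertical condition would have to be re-inserted by hand. This is where your argument is genuinely incomplete. You have not said what non-concentration condition the curve parameters satisfy (they come from the integration variable $x$, not from the tubes), nor how the $g$-weighted continuous $\theta$-integral matches the discrete, unweighted sum in PYZ, nor --- as you acknowledge --- how the bookkeeping in $x_{3}$ and $\rho$ produces exactly $\delta^{-1/3}$. The paper's point is that the vertical projection collapses both scales into a single application of PYZ at scale $\delta^{2}$, so there is nothing left to reconcile.
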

The inequality of Theorem \ref{t:MainIntro} is the the best
possible in the sense discussed in Remark \ref{rmk:sharpness}; see
also Remark \ref{rmk:interpolation}.

\subsection{Implications of the main result and related work}
Our definition of Heisenberg Kakeya maximal functions is inspired
by the \emph{Euclidean} Kakeya maximal functions
$\mathcal{K}_{\delta}f$ introduced by Bourgain \cite{MR1097257}.
The prominent \emph{Kakeya maximal conjecture} states that
\begin{displaymath}
\|\mathcal{K}_{\delta}f\|_{L^n(S^{n-1})}\lesssim_{n,\varepsilon}
\delta^{-\varepsilon} \,\|f\|_{L^n(\mathbb{R}^n)}\quad\text{for
all }\varepsilon>0,\, 0<\delta<1.
\end{displaymath}
This is currently known only for $n=2$, where it was first proven
by C\'{o}rdoba \cite{MR447949}.
For a survey of the important developments related to Euclidean
Kakeya maximal inequalities, we refer the reader to, for instance,
\cite{MR1660476,MR2003254,MR3617376}. As far as we know, Theorem 1
does not entail progress related to the Kakeya maximal conjecture
in $\mathbb{R}^3$. Its setting differs from that of standard
Euclidean Kakeya inequalities in two crucial aspects: The
``$\delta$-tubes''  used in the definition of the Heisenberg
Kakeya maximal function
\begin{enumerate}
\item[(i)] are defined using the \emph{Kor\'{a}nyi distance}
 and hence have volume comparable to $\delta^3$,
\item[(ii)] arise as \emph{Heisenberg left translates} of a
one-dimensional family of Heisenberg tubes $\{T_{\delta}(0,e):\,
e\in S^1\}$ pointing in the \emph{horizontal directions}
$\{e\}\times \{0\}$, $e\in S^1$.
\end{enumerate}
Our definition is tailored to the geometry of $\mathbb{H}^1$. As a
corollary of Theorem \ref{t:MainIntro}, we recover a result,
originally due to Liu \cite{MR4439466}, which states that
\emph{Heisenberg Kakeya sets} of horizontal unit line segments in
$\mathbb{H}^1$ (in the sense of Definition \ref{d:HeisKakeya})
have Hausdorff dimension at least $3$ with respect to
$d_{\mathbb{H}}$. This bound is sharp, as evidenced by the
horizontal plane $\{x_3=0\}$. In contrast, a union of horizontal
line segments whose directions range in a positive measure subset
of $S^2$ in the classical sense must necessarily have Hausdorff
dimension $4$ with respect to $d_{\mathbb{H}}$. This follows from
the full dimensionality of such unions with respect to Euclidean
metric, proven recently by Orponen and the first author in
\cite{2022arXiv221009955F}, and with a different technique by
Katz, Wu, and Zahl in \cite{2022arXiv221105194K}. The proof in
\cite{2022arXiv221009955F} used a Marstrand-type theorem for a
restricted family of orthogonal projections onto \emph{planes} in
$\mathbb{R}^3$, while the  results in the present paper and in
\cite{MR4439466} are conceptually related to a projection theorem
for \emph{lines}, which seems well suited when the Heisenberg
metric is used,  see Section \ref{ss:ouline}.

\medskip

\noindent We briefly discuss the relation with other results in
the literature. The version of Wolff's circular maximal function
by Pramanik, Yang, and Zahl \cite{2022arXiv220702259P} was
recently also applied in \cite{2022arXiv221009581W} to prove a
special case of the Kakeya conjecture in $\mathbb{R}^3$, the
\emph{sticky Kakeya set conjecture}, and in
\cite{2022arXiv221105194K} for the \emph{$SL(2)$ Kakeya
conjecture}. Since in our case, the coaxial lines of the relevant
tubes  are  horizontal lines, we do not need the full strength of
\cite{2022arXiv220702259P}, but only a special case for quadratic
functions. On the other hand, the proof of Theorem
\ref{t:MainIntro} necessitates considerations related to the
geometry of $(\mathbb{H}^1,d_{\mathbb{H}})$, as mentioned in  (i)
-(ii).

Certain lower bounds for the Hausdorff dimension with respect to
$d_{\mathbb{H}}$ of \emph{standard} (not Heisenberg) Kakeya sets
in $\mathbb{R}^{2n+1}$ were deduced by Venieri \cite{MR3290381}
from known bounds  for the
 \emph{standard} Kakeya maximal functions.  On the other hand,
 Euclidean Kakeya problems with a restricted set of directions  were studied in \cite{MR1922668,MR3676055,2022arXiv220315731K,2022arXiv220401408G}, but Heisenberg Kakeya sets lie outside this scope. The reason is that
the left translate $y\cdot I_e$ of a segment $I_e$ with respect to
the Heisenberg product need not be parallel to $I_e$ in the
Euclidean space $\mathbb{R}^3$. The Heisenberg Kakeya maximal
functions introduced in the present paper are closer in spirit to
the Nikodym maximal functions defined by Kim in
\cite{MR2541278,MR2956810}, but he works with different types of
tubes and  his results do not seem to have direct implications
regarding the Heisenberg Kakeya inequality studied here. Finally,
we mention that Venieri formulated in \cite{MR3676055} an
axiomatic framework for deriving Kakeya-type inequalities and
Hausdorff dimension lower bounds for Kakeya-type sets using ideas
by Bourgain and Wolff, which however does not seem to yield the
inequality in Theorem \ref{t:MainIntro}. Nonetheless, the
arguments we use to derive the sharp Hausdorff dimension bound for
Heisenberg Kakeya sets fit well in this axiomatic
 framework,
cf., e.g., the proof of \cite[Theorem 4.1]{MR3676055}.

\subsection{Outline of the proofs}\label{ss:ouline} Theorem
\ref{t:MainIntro} can be obtained by duality from the following
discretized Heisenberg Kakeya inequality.
\begin{restatable}[Kakeya inequality for Heisenberg tubes]{thm}{LinKakeyaIneqIntro}
\label{t:LinKakeyaIneqIntro} Let $\delta\in (0,1)$ and assume that
$\calT$ is a family of Heisenberg $\delta$-tubes pointing in
$\delta^2$-separated directions of $S^1$. Then, for all
$\varepsilon>0$,
\begin{equation}\label{eq:t:LinKakeyaIneq}
    \int \left(\sum_{T\in\calT}\chi_{T}\right)^{3/2}\lesssim_\varepsilon
    \delta^{3-\varepsilon}\mathrm{card}(\calT).
\end{equation}
\end{restatable}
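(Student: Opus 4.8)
The plan is to prove the discretized Kakeya inequality by reducing the three-dimensional Heisenberg tube problem to a two-dimensional incidence problem for arcs of parabolas, and then invoking the Pramanik–Yang–Zahl circular maximal function theorem in its special case for quadratic functions.

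Here is my reasoning. Each Heisenberg $\delta$-tube has coaxial line equal to a horizontal line in $\mathbb{H}^1$, and such a line is determined by its direction $e\in S^1$ together with a point it passes through. First I would recall (from the earlier definitions in the paper) how a horizontal unit segment in direction $e$ is parametrized, and how left-translation by $y\in\mathbb{H}^1$ acts on it. The crucial geometric fact is that a horizontal line in $\mathbb{H}^1$ is encoded by a point in a two-parameter family, and incidences between the $\delta$-tube and a fixed point $p\in\mathbb{H}^1$ translate, via a suitable coordinate change, into incidences between $p$ and a $\delta$-neighborhood of a \emph{parabola} in a parameter plane $\mathbb{R}^2$. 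This is where the Kor\'anyi geometry enters through point (i): because $\|x\|_{\mathbb{H}}$ weights the vertical coordinate $x_3$ quadratically, the condition that a point lies in $T_\delta(y,e)$ becomes, after projecting out the direction of the tube, a condition that a certain linear functional of $y$ lies within $\delta$ of a quadratic function of $e$. Concretely, fixing the base point $p$ and letting $(y,e)$ vary, the set of tubes through $p$ should correspond to a $\delta$-neighborhood of a parabola in the $(e,\text{height})$-plane, and these parabolas satisfy Sogge's cinematic curvature condition uniformly.

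The main steps, in order, are as follows. Step one: set up the parametrization so that each tube $T\in\calT$ with direction $e_T$ corresponds to a planar curve $\gamma_T$ (an arc of a parabola) in a fixed parameter space, with the $\delta^2$-separation of directions in $S^1$ becoming an appropriate separation of the curves. Step two: rewrite the left-hand side $\int(\sum_T \chi_T)^{3/2}$ by Fubini, slicing $\mathbb{H}^1$ along the fibers transverse to the tube directions, so that the $3/2$-power integral over $\mathbb{R}^3$ is controlled by an $L^{3/2}$ bound for the corresponding planar multiplicity function $\sum_T \chi_{\gamma_T^\delta}$ on $\mathbb{R}^2$, up to the extra factor of $\delta$ coming from integrating in the remaining (vertical) variable. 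Step three: apply the Pramanik–Yang–Zahl incidence/maximal bound for $\delta$-neighborhoods of parabolas to obtain
\begin{displaymath}
\int_{\mathbb{R}^2}\Big(\sum_{T\in\calT}\chi_{\gamma_T^\delta}\Big)^{3/2}\lesssim_\varepsilon \delta^{2-\varepsilon}\,\mathrm{card}(\calT),
\end{displaymath}
and then reinsert the $\delta$-factors from the reduction to recover the claimed exponent $\delta^{3-\varepsilon}$ on the right-hand side.

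The hard part will be Step one and the verification at the end of Step three: making the correspondence between Heisenberg tubes and parabola-neighborhoods \emph{quantitatively} faithful, so that containment $p\in T$ is comparable to $\gamma_T^\delta$-incidence at the correct scale, and so that the bookkeeping of powers of $\delta$ matches up exactly. In particular one must check that the Kor\'anyi tube $T_\delta(y,e)$, whose defining inequalities mix the three coordinates through the quartic $\|\cdot\|_{\mathbb{H}}$, really does project to a $\delta$-neighborhood (not $\delta^2$ or $\sqrt\delta$) of the parabola in the chosen transverse plane, and that the Jacobian of the coordinate change is bounded above and below uniformly over the relevant range of directions. The $\delta^2$-separation hypothesis on directions is exactly what guarantees the hypotheses of the planar theorem are met with uniform constants. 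I expect the curvature condition itself to be immediate once the parabola parametrization is in place, since parabolas trivially satisfy cinematic curvature; the delicate point is purely the geometric dictionary between $(\mathbb{H}^1,d_{\mathbb{H}})$ and the parameter plane.
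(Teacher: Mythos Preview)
Your strategy matches the paper's: project via the vertical Heisenberg projection $\pi_{\mathbb{W}}$, use Fubini along its fibres, and apply Pramanik--Yang--Zahl to the resulting planar parabola-neighbourhood problem. But your scaling is wrong at the decisive point. The projection $\pi_{\mathbb{W}}$ sends a Heisenberg $\delta$-tube into a $\sim\delta^{2}$-neighbourhood of a parabola arc, \emph{not} a $\delta$-neighbourhood (Lemma~\ref{l:tube_parab}): the Kor\'anyi anisotropy you invoke means a $d_{\mathbb{H}}$-ball of radius $\delta$ has vertical extent $\sim\delta^{2}$, and it is precisely the vertical coordinate that survives under $\pi_{\mathbb{W}}$. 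Likewise, the $\delta^{2}$-separation of directions becomes $\sim\delta^{2}$-separation of the leading coefficients $a$ of the parabolas $s\mapsto\tfrac{a}{2}s^{2}+bs+c$. Thus Corollary~\ref{cor:LinKakeyaParabolas} is applied at scale $\delta^{2}$, and \emph{that} is what produces the factor $\delta^{2-\varepsilon}$ in your display. If the projected neighbourhoods had width $\delta$ as you suppose, PYZ at scale $\delta$ would only give $\delta^{1-\varepsilon}$, and worse, the non-concentration hypothesis~\eqref{eq:KatzTao} at scale $\delta$ would fail, since a $\rho$-ball could contain $\sim\rho/\delta^{2}$ of the parameter points.

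The remaining factor of $\delta$ does not come from a ``vertical'' integration either: the fibres of $\pi_{\mathbb{W}}$ are horizontal cosets $w\cdot\mathbb{L}$, and one uses that each such fibre meets a $\delta$-tube (in a direction transverse to $\mathbb{L}$) in a set of length $\lesssim\delta$ (Lemma~\ref{l:tube_cap_line_lesssim_delta}) inside the Fubini identity~\eqref{eq:Fubini}. You will also need two preliminary reductions that you do not mention: restrict to directions in a fixed quarter of $S^{1}$ (so that $|a|\lesssim 1$ and the transversality to $\mathbb{L}$ is uniform), and localise to a fixed Kor\'anyi unit ball so that the parameters $(a,b,c)$ and the arc-domains stay in a compact set, as the hypotheses of Corollary~\ref{cor:LinKakeyaParabolas} require.
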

The fact that Theorem \ref{t:LinKakeyaIneqIntro} can be applied to
(maximal) families of tubes pointing in $\delta^2$-separated
directions -- and not only to sparser collections of directionally
$\delta$-separated tubes -- is crucial in deriving Theorem
\ref{t:MainIntro} from Theorem \ref{t:LinKakeyaIneqIntro} with the
help of Lemma \ref{l:KakeyaMax_small_angle_diff}.  This is a
particular feature of the Heisenberg group, see Remark
\ref{rmk:exponent_mismatch}.

\medskip

To understand why Theorem \ref{t:LinKakeyaIneqIntro} holds for
Heisenberg $\delta$-tubes pointing in $\delta^2$-separated
directions, we reduce its proof to a Kakeya-type inequality for
neighborhoods of arcs of parabolas in $\mathbb{R}^2$, see
Corollary \ref{cor:LinKakeyaParabolas}. A crucial tool for  this
reduction is the \emph{vertical Heisenberg projection}
\begin{equation}\label{eq:proj}
\pi_{\mathbb{W}}:\mathbb{H}^1 \to \mathbb{W},\quad
\pi_{\mathbb{W}}\left((0,x_2,x_3)\cdot (x_1,0,0)\right)=
(0,x_2,x_3),
\end{equation}
where ``$\cdot$'' denotes the group product. This bears
similarities with the use of the \emph{twisted projections} in
\cite{2022arXiv221009581W}; see \cite[Definition
6.1]{2022arXiv221009581W}  for $f(z)=z/2$ and its relation with
\cite[(7.4)]{2022arXiv221009581W}. Lemma \ref{l:tube_parab} shows
that Heisenberg $\delta$-tubes in $\delta^2$-separated directions
are mapped by $\pi_{\mathbb{W}}$ into $\sim
\delta^2$-neighborhoods of graphs of quadratic polynomials with
$\sim \delta^2$-separated leading coefficients, at least under
suitable assumptions on the position and direction of the tubes.
This allows to deduce Theorem \ref{t:LinKakeyaIneqIntro} from
Corollary \ref{cor:LinKakeyaParabolas} applied at scale
``$\delta^2$''. Finally, Corollary \ref{cor:LinKakeyaParabolas} is
obtained as a simple special case of \cite[Theorem
1.7]{2022arXiv220702259P}.

Liu's earlier work on Heisenberg Kakeya sets \cite{MR4439466} was
based on a Marstrand-type theorem for the almost sure Hausdorff
dimension of orthogonal projections from $\mathbb{R}^3$ onto
one-dimensional subspaces foliating the surface of a cone in
$\mathbb{R}^3$. This projection theorem was first proven by
K\"aenm\"aki, Orponen, and Venieri in
\cite{https://doi.org/10.48550/arxiv.1708.04859}, and recently
sharpened and extended to a larger class of families of
one-dimensional subspaces by Pramanik, Yang, and Zahl using the
Kakeya-type inequality in \cite[Theorem 1.7]{2022arXiv220702259P}
mentioned above. The projection theorem was independently and
simultaneously extended in \cite{2022arXiv220915152G} with
different methods, but here we discuss especially the approaches
in
\cite{https://doi.org/10.48550/arxiv.1708.04859,2022arXiv220702259P}
since they were both inspired by Wolff's work on the circular
Kakeya problem. In particular, Liu's proof of the dimension bound
for Kakeya sets is ultimately based on Wolff's results for
$\delta$-annuli in $\mathbb{R}^2$, albeit indirectly via the
projection theorem. By using the recent generalization
\cite{2022arXiv220702259P} of Wolff's work, which directly applies
to arcs of parabolas in $\mathbb{R}^2$, we are able to make use of
planar incidence geometry in a more direct way and obtain a
stronger conclusion in the form of a Heisenberg Kakeya maximal
inequality.

\subsection{Structure of the paper} Section \ref{ss:tubes} contains
preliminaries on Heisenberg $\delta$-tubes and cinematic
functions.  In Section \ref{ss:LinKakeya}, we prove the main
results of the paper, the Heisenberg Kakeya inequalities in
Theorems \ref{t:MainIntro} and \ref{t:LinKakeyaIneqIntro}.
Finally, in Section \ref{ss:Dim}, we apply  Theorem
\ref{t:MainIntro} to give a new proof for Liu's theorem on the
dimension of Heisenberg Kakeya sets.

\subsection{Notation}  If $f,g\geq 0$, the notation $f\lesssim g$
denotes the existence of a positive  constant $C$ such that $f\leq
C g$. The notation $f\lesssim_{\kappa} g$ means that $C$ may
depend on a parameter ``$\kappa$''. Finally, $f\sim g$ is an
abbreviation of $f\lesssim g \lesssim f$. We denote by $|E|$ the
$d$-dimensional Lebesgue measure of a measurable set $E\subset
\mathbb{R}^d$. The $s$-dimensional Hausdorff measure on
$\mathbb{H}^1$ with respect to $d_{\mathbb{H}}$ is denoted by
$\mathcal{H}^s_{\mathbb{H}}$, or  $\mathcal{H}^s$ if the metric is
clear from the context.  Kor\'{a}nyi balls are denoted by
$B_{\mathbb{H}}(x,r)$ or $B(x,r)$, and Euclidean balls by
$B_E(x,r)$.

\subsection*{Acknowledgements} Part of this work was
done while K.F.\ and P.W.\  visited the Hausdorff Research
Institute for Mathematics, Bonn, during the trimester
\emph{Interactions between Geometric measure theory, Singular
integrals, and PDE}. The hospitality of the institute is
gratefully acknowledged. Part of this work was done while P.W.\
was a master student at the University of Trento. We also  wish to
thank Tuomas Orponen and Joshua Zahl for inspiring discussions in
Bonn that gave  an important impetus for the present paper.

\section{Preliminaries on Heisenberg tubes and cinematic functions}\label{ss:tubes}

\subsection{Heisenberg tubes} We consider the Heisenberg group
$\mathbb{H}^1 =(\mathbb{R}^3,\cdot)$ with the left-invariant
Kor\'{a}nyi metric as defined in Section \ref{s:Intro}. For a
thorough introduction to this space, we refer the reader to
\cite{CDPT}. To define Heisenberg Kakeya maximal functions, we
need \emph{Heisenberg tubes}, whose definition and elementary
properties we now discuss.

\begin{definition}[Heisenberg tubes]\label{d:HeisTube} Let $0<\delta<1$. Given $y\in \mathbb{H}^1$ and $e\in S^1$, the \emph{Heisenberg
$\delta$-tube} $T_{\delta}(y,e)$ is the $\delta$-neighborhood (in
the metric $d_{\mathbb{H}}$) of the horizontal line segment
 $y\cdot I_e$, where  $I_e = \{(se,0): s\in [-1/2,1/2]\}$. We also say
 that $T_{\delta}(y,e)$ \emph{points in direction~$e$}.
\end{definition}

It is often convenient to write Heisenberg $\delta$-tubes in the
following more explicit form.

\begin{lemma}\label{l:Heis_tube}
For every $y\in \mathbb{H}^1$, $e\in S^1$, and $\delta>0$,
\begin{displaymath}
T_{\delta}(y,e)= \{y\cdot (se,0)\cdot B(0,\delta):\, s\in
[-1/2,1/2]\},
\end{displaymath}
where $B(0,\delta)=\{x\in \mathbb{H}^1:\, \|x\|_{\mathbb{H}}\leq
\delta\}$.
\end{lemma}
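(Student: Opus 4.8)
The plan is to unfold the definition of the $\delta$-neighborhood and to reduce everything to the single algebraic fact that Kor\'{a}nyi balls are left-translates of balls centered at the origin. By Definition \ref{d:HeisTube}, $T_{\delta}(y,e)$ is the closed $\delta$-neighborhood of $y\cdot I_e$ with respect to $d_{\mathbb{H}}$, i.e.
\begin{displaymath}
T_{\delta}(y,e) = \{p \in \mathbb{H}^1 : \dist_{\mathbb{H}}(p, y\cdot I_e) \le \delta\}, \qquad \dist_{\mathbb{H}}(p, A) = \inf_{a \in A} d_{\mathbb{H}}(p, a).
\end{displaymath}
Since $I_e$ is compact and $p \mapsto d_{\mathbb{H}}(p,\cdot)$ is continuous, the infimum defining $\dist_{\mathbb{H}}(p, y\cdot I_e)$ is attained; hence $p \in T_{\delta}(y,e)$ if and only if $d_{\mathbb{H}}(p,a) \le \delta$ for some $a \in y\cdot I_e$. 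Writing $y\cdot I_e = \{y\cdot (se,0) : s \in [-1/2,1/2]\}$, this gives
\begin{displaymath}
T_{\delta}(y,e) = \bigcup_{s \in [-1/2,1/2]} B\big(y\cdot (se,0), \delta\big).
\end{displaymath}

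Next I would establish the translation identity $B(a,\delta) = a \cdot B(0,\delta)$ for every $a \in \mathbb{H}^1$. This is immediate from the left-invariance of $d_{\mathbb{H}}$: for any $g,x,z \in \mathbb{H}^1$,
\begin{displaymath}
d_{\mathbb{H}}(g\cdot x, g\cdot z) = \|(g\cdot z)^{-1}\cdot (g\cdot x)\|_{\mathbb{H}} = \|z^{-1}\cdot x\|_{\mathbb{H}} = d_{\mathbb{H}}(x,z).
\end{displaymath}
Consequently $p \in B(a,\delta)$, i.e. $d_{\mathbb{H}}(p,a) \le \delta$, holds precisely when $\|a^{-1}\cdot p\|_{\mathbb{H}} = d_{\mathbb{H}}(a^{-1}\cdot p, 0) \le \delta$, that is when $a^{-1}\cdot p \in B(0,\delta)$, or equivalently $p \in a\cdot B(0,\delta)$.

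Combining the two displays with $a = y\cdot (se,0)$ yields
\begin{displaymath}
T_{\delta}(y,e) = \bigcup_{s \in [-1/2,1/2]} y\cdot (se,0)\cdot B(0,\delta),
\end{displaymath}
which is exactly the asserted formula. The argument is essentially routine; the only point requiring a word of care is the use of compactness of $I_e$ to pass from the metric neighborhood $\{\dist_{\mathbb{H}}(\cdot, y\cdot I_e) \le \delta\}$ to a union of \emph{closed} balls, together with the consistent use of closed balls throughout so as to match the closed ball $B(0,\delta)$ in the statement. No deeper obstacle is present, since the entire content of the lemma is the left-invariance of the Kor\'{a}nyi metric.
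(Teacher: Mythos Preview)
Your proof is correct and follows essentially the same approach as the paper: both arguments amount to the left-invariance identity $B(a,\delta)=a\cdot B(0,\delta)$ applied to the points $a=y\cdot(se,0)$ of the core segment. The paper presents this as two separate inclusions while you phrase it as a single union-of-balls identity, and you are slightly more explicit about invoking compactness of $I_e$ to attain the infimum, but the mathematical content is identical.
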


\begin{proof}
To prove the inclusion ``$\subseteq$'', we consider an arbitrary
point $x\in T_{\delta}(y,e)$. Then there exists $s\in [-1/2,1/2]$
such that
\begin{displaymath}
\|[y\cdot (se,0)]^{-1}\cdot x \|_{\mathbb{H}} =
d_{\mathbb{H}}(x,y\cdot (se,0) )\leq \delta.
\end{displaymath}
Writing $x = y\cdot (se,0) \cdot [y\cdot (se,0)]^{-1}\cdot x$ then
proves the desired inclusion. For the reverse inclusion, it
suffices to observe for all $z\in B(0,\delta)$ that
\begin{displaymath}
d_{\mathbb{H}}\left(y\cdot (se,0)\cdot z,y \cdot
(se,0)\right)=\|z\|_{\mathbb{H}} \leq \delta.\qedhere
\end{displaymath}
\end{proof}

The Heisenberg Kakeya maximal function $M_{\delta}f$ is defined by
taking averages over Heisenberg $\delta$-tubes with respect to the
$3$-dimensional Lebesgue measure on the underlying $\mathbb{R}^3$.
We recall that this measure is invariant under translations with
respect to the Heisenberg group product, and it agrees up to a
positive and finite multiplicative factor with the $4$-dimensional
Hausdorff measure $\mathcal{H}^4$ with respect to
$d_{\mathbb{H}}$. The space $(\mathbb{H}^1,d_{\mathbb{H}})$ is
topologically $3$-dimensional, but Ahlfors $4$-regular. While a
Euclidean $\delta$-tube of length $1$ in $\mathbb{R}^3$ has volume
$\sim \delta^2$, the volume of Heisenberg $\delta$-tubes is $\sim
\delta^3$.

\begin{lemma}[Volume of a Heisenberg tube]\label{l:vol} For each
$\delta\in (0,1)$, Heisenberg $\delta$-tubes have volume
\begin{displaymath}
|T_{\delta}(y,e)|\sim \delta^3,\quad y\in \mathbb{H}^1,\, e\in
S^1.
\end{displaymath}
\end{lemma}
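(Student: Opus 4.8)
The plan is to reduce to a single normalized tube using the symmetries of $(\mathbb{H}^1,d_{\mathbb{H}})$, and then to compute the volume of that tube directly by Fubini, the only subtlety being a shear in the vertical variable. First, since the $3$-dimensional Lebesgue measure is invariant under left translations (as recalled above), $|T_{\delta}(y,e)| = |T_{\delta}(0,e)|$, so I may take $y=0$. Next, for $\theta\in\mathbb{R}$ the rotation $R_{\theta}(x_1,x_2,x_3)=(x_1\cos\theta - x_2\sin\theta,\, x_1\sin\theta + x_2\cos\theta,\, x_3)$ about the vertical axis is a group automorphism which preserves both $\|\cdot\|_{\mathbb{H}}$ and Lebesgue measure (its linear part has determinant $1$), and it satisfies $R_{\theta}(B(0,\delta))=B(0,\delta)$ and $R_{\theta}((se,0))=(sR_{\theta}e,0)$. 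Hence $R_{\theta}(T_{\delta}(0,e))=T_{\delta}(0,R_{\theta}e)$ by Lemma \ref{l:Heis_tube}, and since every $e\in S^1$ equals $R_{\theta}e_1$ for some $\theta$, the volume is independent of $e$; I may thus assume $e=e_1=(1,0)$.

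It then remains to estimate $|T_{\delta}(0,e_1)|$. Using the group law, $(s,0,0)\cdot(z_1,z_2,z_3)=(s+z_1,\,z_2,\,z_3+\tfrac{1}{2}sz_2)$, so by Lemma \ref{l:Heis_tube} a point $x=(x_1,x_2,x_3)$ lies in $T_{\delta}(0,e_1)$ precisely when there is $s\in[-1/2,1/2]$ with
\begin{equation*}
\big((x_1-s)^2+x_2^2\big)^2+16\big(x_3-\tfrac{1}{2}sx_2\big)^2\leq\delta^4.
\end{equation*}
The first summand forces $|x_1-s|\leq\delta$ and $|x_2|\leq\delta$, so the admissible parameter is pinned to $s\approx x_1$, while the second forces $|x_3-\tfrac{1}{2}sx_2|\leq\tfrac{1}{4}\delta^2$. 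This is the heart of the matter: although the \emph{global} vertical extent of the tube is $\sim\delta$ (because $\tfrac{1}{2}sx_2$ ranges over an interval of length $\sim\delta$ as $s$ runs over $[-1/2,1/2]$), for each \emph{fixed} horizontal pair $(x_1,x_2)$ the admissible $s$ are confined to an interval of length $\lesssim\delta$, so the vertical slice $\{x_3:(x_1,x_2,x_3)\in T_{\delta}(0,e_1)\}$ has length only $\lesssim|x_2|\delta+\delta^2\lesssim\delta^2$. It is exactly this shear — not a genuine product structure — that turns the naive estimate $1\cdot\delta\cdot\delta=\delta^2$ into the correct $\delta^3$.

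With this in hand I would close the estimate by Fubini. For the upper bound, integrating first in $x_3$ over the slice of length $\lesssim\delta^2$, then over $|x_2|\lesssim\delta$ and $|x_1|\lesssim1$, gives $|T_{\delta}(0,e_1)|\lesssim 1\cdot\delta\cdot\delta^2=\delta^3$. For the matching lower bound I would restrict to the explicit subset on which $|x_1|\leq1/4$, $|x_2|\leq\delta/2$, and $|x_3-\tfrac{1}{2}x_1x_2|\leq\tfrac{1}{8}\delta^2$; choosing $s=x_1$ (which is admissible) verifies that this set is contained in $T_{\delta}(0,e_1)$, and its volume is $\tfrac{1}{2}\cdot\delta\cdot\tfrac{1}{4}\delta^2\gtrsim\delta^3$. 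The only place demanding care is the vertical slicing described above; once the shear is correctly accounted for, both bounds are routine one-variable integrals. Alternatively, one could reach the same conclusion by applying the anisotropic Heisenberg dilation $(x_1,x_2,x_3)\mapsto(x_1/\delta,x_2/\delta,x_3/\delta^2)$, which has Jacobian $\delta^{-4}$ and maps $T_{\delta}(0,e_1)$ to the unit neighborhood of a horizontal segment of length $1/\delta$, reducing the claim to showing that the latter has volume $\sim\delta^{-1}$; but this reformulation merely relocates the same shear computation.
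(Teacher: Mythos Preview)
Your argument is correct, but it takes a genuinely different route from the paper. The paper gives a short metric ball-packing proof: choose a maximal $\delta$-separated net $\{x_1,\dots,x_N\}$ on the core segment $y\cdot I_e$ (so $N\sim\delta^{-1}$, since the segment is isometric to $[0,1]$), observe that the disjoint balls $B_{\mathbb{H}}(x_i,\delta/2)$ are all contained in $T$ (giving $|T|\gtrsim N\delta^4\sim\delta^3$), and that the balls $B_{\mathbb{H}}(x_i,2\delta)$ cover $T$ (giving $|T|\lesssim N\delta^4\sim\delta^3$). No coordinates, no symmetry reduction, no Fubini. Your approach instead reduces by left translation and rotation to the tube $T_{\delta}(0,e_1)$ and then computes its volume directly by slicing in the vertical variable, where the key point is that the shear constrains each vertical slice to length $\lesssim\delta^2$. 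What your method buys is explicitness---one sees precisely where the extra factor of $\delta$ comes from in coordinates---at the cost of a longer computation tied to the specific group law. The paper's argument is shorter and works verbatim for the $\delta$-neighborhood of any arc isometric to a Euclidean interval inside an Ahlfors $4$-regular space; it never unpacks the shear at all.
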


\begin{proof}
Given a tube $T=T_{\delta}(y,e)$ as in the statement of the lemma,
let $\{x_1,\ldots,x_N\}$ be a maximal $\delta$-separated subset of
its core segment $y \cdot I_e$. Then $N\sim \delta^{-1}$, because
$(y\cdot I_e,d_{\mathbb{H}})$ is isometric to $([0,1],|\cdot|)$.
Now the open balls $B_{\mathbb{H}}(x_i,\delta/2)$, $i=1,\ldots,N$,
are pairwise disjoint, have volume comparable to $\delta^4$ and
are all contained in $T$, so that $|T|\gtrsim \delta^3$. On the
other hand, the union of the balls $B_{\mathbb{H}}(x_i,2\delta)$,
$i=1,\ldots,N$, covers  $T$, hence $|T|\lesssim \delta^3$.
\end{proof}

\subsection{From Heisenberg tubes to neighborhoods of
parabolas}\label{ss:FromTubeToPar} We now discuss connections
between Heisenberg $\delta$-tubes in $\mathbb{H}^1$ and Euclidean
$\delta^2$-neighborhoods of arcs of parabolas in $\mathbb{R}^2$.
For $\delta=0$, i.e., for horizontal line segments and parabolic
arcs, such connections were used by Liu \cite{MR4439466} to prove
the dimension bound for Heisenberg Kakeya sets. The relevant
parabolas take a particularly simple form if Heisenberg tubes are
described by parameters $a,b,c$ and $y_2$ as in the following
lemma.


\begin{lemma}\label{l:TubeParam} If
$e =(\cos \varphi,\sin \varphi)\in S^1$ for $\varphi\in (0,\pi)$
and  $y=(y_1,y_2,y_3)$, then
\begin{equation}\label{eq:TubeParam}
T_{\delta}(y,e)=  \left\{(b,0,c)\cdot (as,s,0)\cdot B(0,\delta):\,
s\in [s_{-},s_{+}]\right\},
\end{equation}
where $s_{\pm}= y_2 \pm \frac{1}{2 \sqrt{1+a^2}}$ and
\begin{equation}\label{eq:(a,b,c)} a= \frac{\cos
\varphi}{\sin \varphi},\quad b = y_1 -\left(\tfrac{\cos
\varphi}{\sin \varphi}\right) y_2,\quad c = y_3-\tfrac{1}{2}y_1
y_2 + \tfrac{1}{2} \left(\tfrac{\cos \varphi}{\sin \varphi}\right)
y_2^2.
\end{equation}
\end{lemma}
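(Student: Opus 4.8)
The plan is to prove the identity by a direct computation with the group law, reparametrizing the core segment of the tube via its second coordinate. Starting from Lemma \ref{l:Heis_tube}, we have $T_{\delta}(y,e) = \{y \cdot (te,0) \cdot B(0,\delta) : t \in [-1/2,1/2]\}$, so by associativity of the Heisenberg product it suffices to prove the core-segment identity
\[
\{y \cdot (te,0) : t \in [-1/2,1/2]\} = \{(b,0,c)\cdot (as,s,0) : s \in [s_-,s_+]\},
\]
after which appending $\cdot\, B(0,\delta)$ to both sides yields the claim.

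First I would compute both sides of this segment identity explicitly. On the left, writing $(te,0) = (t\cos\varphi, t\sin\varphi, 0)$, one application of the product gives
\[
y \cdot (te,0) = \big(y_1 + t\cos\varphi,\ y_2 + t\sin\varphi,\ y_3 + \tfrac{t}{2}(y_1\sin\varphi - y_2\cos\varphi)\big),
\]
while on the right a second application gives $(b,0,c)\cdot(as,s,0) = (b+as,\ s,\ c + \tfrac{bs}{2})$.

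The key move is to reparametrize through the second coordinate: I would set $s = y_2 + t\sin\varphi$, equivalently $t = (s-y_2)/\sin\varphi$, which is legitimate since $\varphi \in (0,\pi)$ forces $\sin\varphi > 0$. As $t$ runs over $[-1/2,1/2]$, $s$ runs over $[y_2 - \tfrac{\sin\varphi}{2},\, y_2 + \tfrac{\sin\varphi}{2}]$; to identify the stated endpoints I would record the elementary identity $1 + a^2 = \sin^{-2}\varphi$, whence $\tfrac{1}{2\sqrt{1+a^2}} = \tfrac{\sin\varphi}{2}$ and this range is exactly $[s_-,s_+]$. Substituting $t=(s-y_2)/\sin\varphi$ into the coordinates of $y\cdot(te,0)$ and using $a = \cos\varphi/\sin\varphi$ and $b = y_1 - ay_2$, the first coordinate becomes $y_1 + a(s-y_2) = b + as$, matching the right-hand side; the second coordinate is $s$ by construction. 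For the third coordinate the same substitution yields $y_3 + \tfrac{s-y_2}{2}(y_1 - ay_2) = y_3 + \tfrac{bs}{2} - \tfrac{by_2}{2}$, so it remains only to verify $c = y_3 - \tfrac{by_2}{2}$; expanding $b = y_1 - ay_2$ gives $y_3 - \tfrac{by_2}{2} = y_3 - \tfrac{y_1y_2}{2} + \tfrac{a}{2}y_2^2$, which is precisely the stated value of $c$.

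I do not anticipate a genuine obstacle, as the proof is a bookkeeping calculation. The one point requiring care is the third coordinate, where the non-commutative correction term $\tfrac{1}{2}(x_1x_2' - x_2x_1')$ of the group law enters on both sides; one must track it consistently so that the $y_2$-dependent constants collapse exactly into the definition of $c$. Once the core segments are shown to coincide under this reparametrization, right-translating both by $B(0,\delta)$ closes the argument.
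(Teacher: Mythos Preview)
Your proof is correct and follows essentially the same route as the paper: both reduce via Lemma~\ref{l:Heis_tube} to the core-segment identity and verify it by a direct group-law computation, reparametrizing through the second coordinate. Your version is slightly more self-contained in that you make the substitution $s=y_2+t\sin\varphi$ explicit rather than citing Liu's lemma for the intermediate parametrization, but the argument is the same.
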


\begin{proof}   By Lemma
\eqref{l:Heis_tube}, it suffices to observe that the horizontal
core segment of $T_{\delta}(y,e)$ can be parameterized as follows
\begin{equation}\label{eq:segParam}
y\cdot I_e =  \{(y_1,y_2,y_3)\cdot (as,s,0):\, s\in
[s_{-},s_{+}]\},
\end{equation}
which was already shown in \cite[Lemma 2.1]{MR4439466} (up to an
obvious change in the roles of the first and second coordinate
axis). Then \eqref{eq:TubeParam} follows since
\begin{align*}
(y_1,y_2,y_3)\cdot (as,s,0)&=
(y_1+as,y_2+s,y_3+\tfrac{1}{2}[y_1-ay_2]s)\\
&= ([y_1-a y_2] + a(y_2+s),y_2+s, y_3 -\tfrac{1}{2} [y_1-ay_2]y_2
+\tfrac{1}{2}[y_1-ay_2](y_2+s))\\
&= (b + a(y_2+s),y_2+s,c+ \tfrac{1}{2}b(y_2+s)).
\end{align*}
\end{proof}


The next result relates Heisenberg tubes to Euclidean
neighborhoods of arcs of parabolas in $\mathbb{R}^2$ via the
vertical Heisenberg projection in \eqref{eq:proj} onto the plane
$\mathbb{W}=\{x_1=0\}$. Explicitly, in coordinates,
\begin{displaymath}
\pi_{\mathbb{W}}:\mathbb{H}^1 \to \mathbb{W},\quad
\pi_{\mathbb{W}}(x_1,x_2,x_3)=\left(0,x_2,x_3+\tfrac{1}{2}x_1
x_2\right).
\end{displaymath}
A related statement appeared in  \cite[Lemma 4.5]{MR3047423}, but
our setting is a little different. For $(a,b,c)\in \mathbb{R}^3$,
we let $\gamma_{(a,b,c)}$ be the parabola in $\mathbb{R}^2$
parameterized by
\begin{displaymath}
\gamma_{(a,b,c)}(s):=(s,\tfrac{a}{2}s^2+bs +c),\quad s\in
\mathbb{R}.
\end{displaymath}
By $[\Gamma]^r$ we denote the Euclidean $r$-neighborhood of a set
$\Gamma\subset \mathbb{R}^2$.

\begin{lemma}[Projections of segments and tubes]\label{l:tube_parab} Let $0<\delta<1$.
For all $e =(\cos \varphi,\sin \varphi)$ with $\varphi\in (0,\pi)$
and $y=(y_1,y_2,y_3)\in \mathbb{R}^3$, we have that
\begin{displaymath}
\pi_{\mathbb{W}}\left(y\cdot
I_{e}\right)=\gamma_{(a,b,c)}\big([s_{-},s_{+}]\big) \quad
\text{and} \quad
\pi_{\mathbb{W}}\left(T_{\delta}(y,e)\right)\subset
\left[\gamma_{(a,b,c)}\big(\big[s_{-}-\delta,s_{+}+\delta\big]\big)\right]^{r},
\end{displaymath}
where $s_{\pm}$ and $(a,b,c)$ are as in Lemma \ref{l:TubeParam},
and $r\sim (1+|a|)\delta^2$.

\end{lemma}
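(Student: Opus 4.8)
The plan is to prove both assertions by a single explicit computation of $\pi_{\mathbb{W}}$ along the parameterization of $T_{\delta}(y,e)$ provided by Lemma \ref{l:TubeParam}, with the identity for the core segment emerging as the special case ``$z=0$'' of the tube inclusion. Throughout I identify $\mathbb{W}=\{x_1=0\}$ with $\mathbb{R}^2$ via $(0,x_2,x_3)\mapsto(x_2,x_3)$, so that the vertical Heisenberg projection reads $\pi_{\mathbb{W}}(x_1,x_2,x_3)=(x_2,\,x_3+\tfrac12 x_1 x_2)$.

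First I would fix $s\in[s_-,s_+]$ and $z=(z_1,z_2,z_3)\in B(0,\delta)$ and write the generic point of the tube as $p=(b,0,c)\cdot(as,s,0)\cdot z$. Using the group law to compute $(b,0,c)\cdot(as,s,0)=(b+as,\,s,\,c+\tfrac12 bs)$ and then multiplying by $z$ gives
\[
p=\Big(b+as+z_1,\; s+z_2,\; c+\tfrac12 bs+z_3+\tfrac12\big[(b+as)z_2-sz_1\big]\Big).
\]
Substituting into $\pi_{\mathbb{W}}$ and expanding $\tfrac12(b+as+z_1)(s+z_2)$, the $sz_1$-terms cancel and the remaining terms collect into
\[
\pi_{\mathbb{W}}(p)=\Big(s+z_2,\;\tfrac{a}{2}s^2+bs+c+z_3+bz_2+asz_2+\tfrac12 z_1 z_2\Big).
\]
Setting $z=0$ yields $\pi_{\mathbb{W}}\big((b,0,c)\cdot(as,s,0)\big)=\gamma_{(a,b,c)}(s)$; since $y\cdot I_e=\{(b,0,c)\cdot(as,s,0):s\in[s_-,s_+]\}$ by Lemma \ref{l:TubeParam}, letting $s$ range over $[s_-,s_+]$ gives the first identity.

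For the tube inclusion I would compare $\pi_{\mathbb{W}}(p)$ with the parabola point $\gamma_{(a,b,c)}(s+z_2)$, which shares its first coordinate $s+z_2$ with $\pi_{\mathbb{W}}(p)$. Expanding $\tfrac{a}{2}(s+z_2)^2+b(s+z_2)+c$ and subtracting, the discrepancy is purely vertical and equals $z_3+\tfrac12 z_1 z_2-\tfrac{a}{2}z_2^2$; hence the Euclidean distance from $\pi_{\mathbb{W}}(p)$ to $\gamma_{(a,b,c)}(s+z_2)$ is exactly $|z_3+\tfrac12 z_1 z_2-\tfrac{a}{2}z_2^2|$. It then remains to insert the size of $z$: from $\|z\|_{\mathbb{H}}=\left((z_1^2+z_2^2)^2+16z_3^2\right)^{1/4}\le\delta$ one reads off $|z_1|,|z_2|\le\delta$ and $|z_3|\le\delta^2/4$. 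These give $s+z_2\in[s_--\delta,s_++\delta]$, so the base point lies on the stated arc, and the triangle inequality bounds the vertical discrepancy by $\left(\tfrac34+\tfrac{|a|}{2}\right)\delta^2\lesssim(1+|a|)\delta^2$, which is the claimed $r$.

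Since the computation is the entire content, I do not expect a genuine obstacle, only careful bookkeeping. The one point deserving attention is the appearance of the factor $1+|a|$: it arises solely from the term $\tfrac{a}{2}z_2^2$, which is produced by measuring the deviation against $\gamma_{(a,b,c)}(s+z_2)$ rather than $\gamma_{(a,b,c)}(s)$. I would also flag at the outset why the base parameter must be shifted from $s$ to $s+z_2$: the projection displaces the first coordinate by exactly $z_2$, and since $|z_2|$ may be as large as $\delta$, this is precisely what forces the widening of $[s_-,s_+]$ to $[s_--\delta,s_++\delta]$ in the statement.
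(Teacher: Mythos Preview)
Your proof is correct and follows essentially the same route as the paper: both compute $\pi_{\mathbb{W}}\big((b,0,c)\cdot(as,s,0)\cdot z\big)$ explicitly, compare it to $\gamma_{(a,b,c)}(s+z_2)$, and read off the vertical discrepancy $z_3+\tfrac12 z_1z_2-\tfrac{a}{2}z_2^2$. The only cosmetic difference is that you organize the segment identity as the $z=0$ case of the tube computation, whereas the paper treats it first in a separate paragraph.
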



\begin{proof}

First, to prove the claim concerning the projected segment, we
recall from the proof of
 Lemma \ref{l:TubeParam} that
\begin{align*}
 y\cdot I_{e} &= \{(as+b,s,c+\tfrac{b}{2}s):\, s\in
 \left[s_{-},s_{+}\right]\}.
 \end{align*}
 It follows immediately that
\begin{displaymath}
 \pi_{\mathbb{W}}\left(y\cdot I_{e_a}\right) = \{(s,\tfrac{a}{2}s^2+bs+c):\, s\in
 \big[s_{-},s_{+}\big]\} =
 \gamma_{(a,b,c)}\big(\big[s_{-},s_{+}\big]\big),
 \end{displaymath}
 where we have identified
 $\{0\}\times \mathbb{R}^2$ with $\mathbb{R}^2$ in the obvious
 way. Next, to prove the claim about the projected tube, we fix $\delta\in (0,1)$ and
consider an arbitrary point
\begin{displaymath}
(b,0,c)\cdot (as,s,0)\cdot z \in y\cdot I_{e} \cdot
B(0,\delta)\overset{\text{Lem. \ref{l:Heis_tube}}}{=}
T_{\delta}(y,e_a),
\end{displaymath}
where $s\in [s_{-},s_{+}]$ and $z\in B(0,\delta)$. By similar
computations as before
\begin{align*}
\pi_{\mathbb{W}}&\left((b,0,c)\cdot (as,s,0)\cdot z \right)\\& =
\left(s+z_2,c +\tfrac{b}{2}s+ z_3+
\tfrac{1}{2}[b+as]z_2-\tfrac{1}{2}s z_1+
\tfrac{1}{2}[b+as+z_1][s+z_2]\right)\\
&= \left(s+z_2,\tfrac{a}{2}s^2 + bs +c
+[b+as]z_2+z_3+\tfrac{1}{2}z_1z_2\right).
\end{align*}
On the other hand,
\begin{align*}
\gamma_{(a,b,c)}(s+z_2)&=\left(s+z_2,\tfrac{a}{2}[s+z_2]^2 +
b[s+z_2]+c\right)\\
&=(s+z_2,\tfrac{a}{2}s^2 + bs +c+[b+as]z_2+\tfrac{a}{2}z_2^2).
\end{align*}
Since $\|z\|_{\mathbb{H}}\leq \delta$, it follows that
\begin{displaymath}
\left|\pi_{\mathbb{W}}\left((b,0,c)\cdot (as,s,0)\cdot z
\right)-\gamma_{(a,b,c)}(s+z_2)\right| = \left|z_3+\tfrac{z_1
z_2}{2}-\tfrac{a}{2} z_2^2\right|\lesssim (1+|a|)\delta^2.
\end{displaymath}
This shows that $\pi_{\mathbb{W}}\left(T_{\delta}(y,e)\right)$ is
contained in the Euclidean $r$-neighborhood of the parabola
$\gamma_{(a,b,c)}$ for some $r\sim (1+|a|)\delta^2$. To conclude
the proof, it suffices now to observe that $s+z_2 \in
[s_{-}-\delta,s_{+}+\delta]$.
\end{proof}

Let $\mathbb{W}$ be the vertical plane $\{x_1=0\}$ and
$\mathbb{L}$ the $x_1$-axis. Together with Lemma
\ref{l:tube_parab}, the next result will allow us to reduce the
Heisenberg Kakeya inequality in Theorem \ref{t:LinKakeyaIneqIntro}
to a corresponding inequality for neighborhoods of parabolic arcs
in $\mathbb{R}^2$ by means of the Fubini-type formula
\begin{equation}\label{eq:Fubini}
\int_{\mathbb{H}^1} h(x) dx= \int_{\mathbb{W}}\int_{\mathbb{L}}
h((0,y,t)\cdot (x,0,0))\,d(y,t)\,dx.
\end{equation}
which holds for nonnegative measurable functions $h$ since
$\Phi(x,y,t)=(0,y,t)\cdot(x,0,0)=(x,y,t-\tfrac{1}{2}xy)$ has
Jacobi determinant $1$. The use of \eqref{eq:Fubini} in our
context requires us to understand how the fibres
$\pi_{\mathbb{W}}^{-1}(w) = w\cdot \mathbb{L}$, for $w\in
\mathbb{W}$, intersect a Heisenberg $\delta$-tube.


\begin{lemma}\label{l:tube_cap_line_lesssim_delta}
Let $\mathbb{W}$ be the vertical plane $\{x_1=0\}$ and
$\mathbb{L}$ the $x_1$-axis.
Then
\begin{equation*}
\calH^1\left(T_\delta(y,e)\cap
\pi_\mathbb{W}^{-1}(w)\right)\lesssim
\delta
\end{equation*}
for all $y\in \Hei$, $w\in\mathbb{W}$,
$\delta\in (0,1)$ and $e\in S^1$ making angle at most $\pi/4$ with
the $x_2$-axis.
\end{lemma}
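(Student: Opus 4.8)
The plan is to parameterise the fibre $\pi_{\mathbb{W}}^{-1}(w)=w\cdot \mathbb{L}$ by the $x_1$-coordinate and thereby convert the length estimate into a diameter bound for a subset of $\mathbb{R}$. Writing $w=(0,w_2,w_3)$, the fibre is $\{w\cdot (x,0,0):x\in\mathbb{R}\}$, and by left-invariance of $d_{\mathbb{H}}$ together with $\|(x-x',0,0)\|_{\mathbb{H}}=|x-x'|$, the map $x\mapsto w\cdot(x,0,0)$ is an isometry from $(\mathbb{R},|\cdot|)$ onto the fibre with the restricted metric $d_{\mathbb{H}}$. Hence $\calH^1$ on the fibre (here the Hausdorff measure with respect to $d_{\mathbb{H}}$, consistent with the paper's convention on $\mathbb{H}^1$; this is also what renders the estimate uniform in $w$) pushes forward to Lebesgue measure, and it suffices to show $\mathcal{L}^1(E)\lesssim \delta$, where $E=\{x\in\mathbb{R}:w\cdot(x,0,0)\in T_\delta(y,e)\}$. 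Since $E\subseteq\mathbb{R}$, I would prove this by bounding its diameter.

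To estimate $\operatorname{diam} E$, I would take $x,x'\in E$, set $p=w\cdot(x,0,0)$ and $p'=w\cdot(x',0,0)$, and pick core points $q=y\cdot(\sigma e,0)$ and $q'=y\cdot(\sigma' e,0)$ on the segment $y\cdot I_e$ with $d_{\mathbb{H}}(p,q)\le\delta$ and $d_{\mathbb{H}}(p',q')\le\delta$, which exist because $p,p'$ lie in the $\delta$-neighbourhood of $y\cdot I_e$. The triangle inequality and the isometry then give $|x-x'|=d_{\mathbb{H}}(p,p')\le 2\delta+d_{\mathbb{H}}(q,q')$, while a short group computation shows $q^{-1}q'=((\sigma'-\sigma)e,0)$, which is horizontal, so $d_{\mathbb{H}}(q,q')=\|((\sigma'-\sigma)e,0)\|_{\mathbb{H}}=|\sigma-\sigma'|$. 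Everything thus reduces to the single estimate $|\sigma-\sigma'|\lesssim \delta$.

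The hard part is precisely this last bound, and it is the only place where the hypotheses genuinely enter: a priori $\sigma,\sigma'$ range over all of $[-1/2,1/2]$, so the two core points could be a full unit apart. The key observation is that every point of the fibre has the \emph{same} second coordinate $w_2$, whereas the second coordinate is additive under the group law and is dominated by the Kor\'anyi norm, $|(g)_2|\le\|g\|_{\mathbb{H}}$. Applying this to $g=q^{-1}p$ and $g=(q')^{-1}p'$ yields $|w_2-(y_2+\sigma\sin\varphi)|\le\delta$ and $|w_2-(y_2+\sigma'\sin\varphi)|\le\delta$, whence $|\sigma-\sigma'|\sin\varphi\le 2\delta$. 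At this point the angle hypothesis is exactly what is needed: $e$ making angle at most $\pi/4$ with the $x_2$-axis forces $\sin\varphi\ge 1/\sqrt{2}$ (after replacing $e$ by $-e$ if necessary, which changes neither $I_e$ nor the tube), so $|\sigma-\sigma'|\le 2\sqrt{2}\,\delta$. Combining the steps gives $\operatorname{diam} E\le(2+2\sqrt{2})\delta$ and hence $\calH^1(T_\delta(y,e)\cap\pi_{\mathbb{W}}^{-1}(w))=\mathcal{L}^1(E)\lesssim\delta$. I would not need Lemma \ref{l:TubeParam} nor any finer structure of $\pi_{\mathbb{W}}$ for this; the argument is self-contained once the group law and the definition of the tube are in hand.
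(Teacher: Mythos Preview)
Your proof is correct and rests on the same key observation as the paper's: points on the fibre $w\cdot\mathbb{L}$ share a common second coordinate, the second coordinate is controlled by the Kor\'anyi norm, and the angle hypothesis makes $|\sin\varphi|\ge 1/\sqrt{2}$. The paper packages this via the $1$-Lipschitz orthogonal projection $P:(\mathbb{H}^1,d_{\mathbb{H}})\to(\mathbb{R}^2,|\cdot|)$, reducing to the planar picture of a line meeting a strip of width $2\delta$, whereas you stay in $\mathbb{H}^1$ and argue directly via the triangle inequality and the coordinate bound $|(g)_2|\le\|g\|_{\mathbb{H}}$; these are two phrasings of the same argument.
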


\begin{remark} Heisenberg projections can be defined for
arbitrary vertical planes in $\mathbb{H}^1$, and Lemma
\ref{l:tube_cap_line_lesssim_delta} holds in this generality with
obvious modifications. Indeed, for each $O\in SO(2)$, the map
$R_O:(x',x_3)\in\R^2\times\R\mapsto (Ox',x_3)$ is a
$(d_{\mathbb{H}},d_{\mathbb{H}})$-isometry, and $R_O \circ
\pi_{\mathbb{W}} = \pi_{R_O(\mathbb{W})}\circ R_O$. Thus, Lemma
\ref{l:tube_cap_line_lesssim_delta} holds more generally for
$\mathbb{L}\subset \R^{2}\times\{0\}$ a $1$-dimensional subspace
and $\mathbb{W}=\mathbb{L}^\perp$ the orthogonal complement in
$\R^3$ of $\mathbb{L}$.
\end{remark}

\begin{proof}[Proof of Lemma
\ref{l:tube_cap_line_lesssim_delta}] We can assume w.l.o.g.\
$y\in\mathbb{W}$; indeed $ \calH^1\left(T_\delta(y,e)\cap
\pi_\mathbb{W}^{-1}(w)\right)= \calH^1\left(T_\delta(w^{-1}\cdot
y,e)\cap \mathbb{L}\right). $ Hence we will show for an arbitrary
$\delta$-tube $T=T_{\delta}((y_1,y_2,y_3),e)$ pointing in
direction $e\in S^1$ that
\begin{equation}\label{eq:goalTubeLine}
\calH^1\left(\{s\in \mathbb{R}:\, (s,0,0)\in T\}\right) \lesssim
\delta.
\end{equation}
We employ the orthogonal projection
$P:\R^3\to\R^2\times\{0\}\equiv\R^2$ to reduce the verification of
\eqref{eq:goalTubeLine} to  planar Euclidean geometry. Clearly, if
$(s,0,0)\in T$, then $(s,0)\in P(T)$. Moreover, since
$P:(\mathbb{H}^1,d_{\mathbb{H}})\to (\mathbb{R}^2,|\cdot|)$ is
$1$-Lipschitz, the projection $P(T)$ is contained in the infinite
strip
\begin{displaymath}
S:=S_{\delta}^{\mathbb{R}^2}((y_1,y_2),e):=
\left[(y_1,y_2)+\mathrm{span}(e)\right]^{\delta},
\end{displaymath}
that is, in the Euclidean $\delta$-neighborhood of the line
$P(y)+\mathrm{span}(e)$. Hence,
\begin{displaymath}
\calH^1\left(\{s\in \mathbb{R}:\, (s,0,0)\in T\}\right)  \leq
\calH^1\left(\{s\in \mathbb{R}:\, (s,0)\in S\}\right).
\end{displaymath}
Since the strip $S$ has width $2\delta$ and points in direction
$e$, trigonometry shows that the the $x_1$-axis intersects $S$ in
an interval of length $2\delta/|\langle e, e_2\rangle|$. By the
assumption on the angle between $e$ and the $x_2$-axis
$\mathrm{span}(e_2)$, we know that $|\langle e, e_2\rangle|\gtrsim
1$, and \eqref{eq:goalTubeLine} follows.
\end{proof}


\subsection{ Kakeya inequality for parabolas} In this section, we
recall a special case of a recent Kakeya-type inequality by
Pramanik, Yang, and Zahl \cite{2022arXiv220702259P} that will be
important for our application to the arcs of parabolas that arise
in Lemma \ref{l:tube_parab}.

\begin{remark}\label{r:EarlierZahl}
The scope of the Kakeya-type inequality in \cite[Theorem
1.7]{2022arXiv220702259P} is broader than what is required for our
application; it is formulated for a class of $C^2$
\emph{cinematic functions}. This condition is
 related to Sogge's \emph{cinematic curvature condition}, which
 was used by Kolasa and Wolff \cite{MR1722768}. We could likely also have employed earlier
work by Zahl \cite{MR2990134,MR3231483}, which holds under the
cinematic curvature assumption instead of \cite[Theorem
1.7]{2022arXiv220702259P}. However, the main results in
\cite{MR2990134,MR3231483} are not directly applicable in our
setting
because we are dealing with polynomials with a ``dimensionality''
or ``non-concentration'' condition for
the coefficients of the quadratic term 
which would force us to make adaptations similar to the ones made
in \cite[Lemma B.2]{lindenstrauss2021polynomial}.
\end{remark}

The arcs of the parabolas $\gamma_{(a,b,c)}$ in Lemma
\ref{l:tube_parab} are instances of the kind of curves studied in
\cite{2022arXiv220702259P}. To see this, for $r>0$ and
$z=(a,b,c)\in \mathbb{R}^3$, we define
\begin{displaymath}
f_z(s)=\tfrac{a}{2}s^2 + bs +c,\quad s\in [-r,r].
\end{displaymath}
and we let $\gamma\in C^2(\R,\R^3)$, $\gamma(s) = (s^2/2,s,1)$. It
is useful to note that $f_z(s) = \langle \gamma(s),z\rangle$ for
all $s\in\R$ and $z\in\R^3$, and
\begin{equation}\label{eq:NonDegGamma}
\mathrm{span}\{\gamma(s),\dot{\gamma}(s),\ddot{\gamma}(s)\}=\mathbb{R}^3,\quad
s\in [-r,r]. \end{equation} Hence the following result is
applicable in our setting.

\begin{thm}[Pramanik-Yang-Zahl]\label{t:LinKakeyaPYZ}
Let $\varepsilon>0$ and $r>0$. Let $I$ be a compact interval and
let $\gamma:I \to \mathbb{R}^3$ be a $C^2$ curve satisfying
$\mathrm{span}\{\gamma(s),\dot{\gamma}(s),\ddot{\gamma}(s)\}=\mathbb{R}^3$,
$s\in I$. Then there exists $\delta_0>0$, depending only on
$\varepsilon$, $r$, and $\gamma:I \to \mathbb{R}^3$, such that the
following holds for all $0<\delta\leq \delta_0$. Let
$Z_{\delta}\subset [-r,r]^3\subset \mathbb{R}^3$ be a set with the
non-concentration condition
\begin{equation}\label{eq:KatzTao}
    \mathrm{card}(Z_{\delta}\cap B)\leq \delta^{-\varepsilon}(\rho/\delta)
\end{equation}
for all Euclidean balls $B\subset \mathbb{R}^3$ of radius $\rho
\geq \delta$. Then
\begin{equation*}
    \int_{[0,1]^2}\left(\sum_{z\in Z_{\delta}}\chi_{\Gamma_z^\delta}\right)^{3/2}
    \leq\delta\cdot\delta^{-C\varepsilon}\mathrm{card}(Z_{\delta}),
\end{equation*}
where $C>0$ is a constant depending on $\gamma:I \to
\mathbb{R}^3$,
 and $\Gamma^\delta_z$ is the Euclidean $\delta$-neighborhood of the graph $\Gamma_z=\{(s,\langle \gamma(s),z\rangle):\, s\in I\}$ in $\mathbb{R}^2$.
\end{thm}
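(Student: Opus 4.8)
The plan is to obtain this statement as the specialization of the general cinematic Kakeya inequality \cite[Theorem 1.7]{2022arXiv220702259P} to the \emph{affine} family of graph curves $\Gamma_z$, $z\in Z_\delta$. Thus the entire task reduces to checking that the defining functions $f_z(s)=\langle\gamma(s),z\rangle$ satisfy the cinematic (``curvature'') hypotheses of that theorem, with constants uniform over $s\in I$ and $z\in[-r,r]^3$, and that the non-concentration condition \eqref{eq:KatzTao} is exactly the hypothesis imposed there on the parameter set.

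First I would isolate the structural feature that makes the verification essentially automatic: since $f_z(s)=\langle\gamma(s),z\rangle$, the dependence on the parameter $z$ is affine, so all $z$-derivatives of order $\geq 2$ vanish and the curvature of the family is carried entirely by the $s$-dependence of $\gamma$. Concretely, for fixed $s$ consider the jet map
\[
\Phi_s:\ z\longmapsto \big(f_z(s),\,f_z'(s),\,f_z''(s)\big)=\big(\langle\gamma(s),z\rangle,\ \langle\dot\gamma(s),z\rangle,\ \langle\ddot\gamma(s),z\rangle\big),
\]
where $'$ denotes differentiation in $s$. This is linear in $z$, with differential the matrix $M(s)$ whose rows are $\gamma(s),\dot\gamma(s),\ddot\gamma(s)$. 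Hypothesis \eqref{eq:NonDegGamma} says precisely that $M(s)$ is invertible for every $s\in I$, and by compactness of $I$ together with $\gamma\in C^2$ its determinant is bounded away from $0$ uniformly on $I$. This uniform invertibility is the manifestation of the cinematic curvature condition in the present affine setting: one has $|\Phi_s(z)-\Phi_s(z')|\sim|z-z'|$ uniformly in $s$, so two distinct graphs $\Gamma_z,\Gamma_{z'}$ cannot be tangent to second order, and the order of contact of their $\delta$-neighborhoods is quantitatively governed by $|z-z'|$—the separation that controls the overlaps and yields the $L^{3/2}$ gain.

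With this identification, the remaining steps are bookkeeping. I would (i) restrict attention to the subinterval of $s\in I$ for which $\Gamma_z$ meets $[0,1]^2$, noting that trimming $I$ preserves the span condition \eqref{eq:NonDegGamma}; (ii) match the Euclidean $\delta$-neighborhoods $\Gamma_z^\delta$ with the $\delta$-plates appearing in \cite[Theorem 1.7]{2022arXiv220702259P}; and (iii) transport \eqref{eq:KatzTao} verbatim as the required non-concentration hypothesis on $Z_\delta\subset[-r,r]^3$. The resulting bound $\delta\cdot\delta^{-C\varepsilon}\,\mathrm{card}(Z_\delta)$, with $C$ depending only on $\gamma$, is then exactly the conclusion of the cited theorem.

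The main obstacle is not a single estimate but the careful checking of uniformity and of the dictionary between the two formulations. One must ensure that the cinematic constants—the lower bound on $|\det M(s)|$ and the $C^2$ norms of $\gamma$—depend only on $\gamma$ and $r$, so that the threshold $\delta_0$ and the exponent constant $C$ inherit the claimed dependencies, and that the affine reparametrization $z\mapsto\Phi_s(z)$ does not distort the Katz--Tao condition beyond the permitted $\delta^{-\varepsilon}$ losses. All of the genuine analytic difficulty—the induction on scales and the incidence/polynomial machinery responsible for the $L^{3/2}$ improvement over the trivial $L^1$ bound—is absorbed into \cite[Theorem 1.7]{2022arXiv220702259P}.
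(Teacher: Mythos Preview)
Your approach is essentially the same as the paper's: both reduce the statement to the cinematic Kakeya inequality of Pramanik--Yang--Zahl by observing that the span hypothesis on $\gamma,\dot\gamma,\ddot\gamma$ yields the uniform bi-Lipschitz equivalence $|z-z'|\sim\|f_z-f_{z'}\|_{C^2(I)}$, which is precisely the cinematic condition for the affine family $f_z=\langle\gamma(\cdot),z\rangle$. The paper invokes \cite[Proposition 2.1]{2022arXiv220702259P} rather than Theorem~1.7 and is slightly more explicit about three technical reconciliations (curves in $\mathbb{R}^3$ versus $S^2$, the parameter domain $[-r,r]^3$ versus the unit ball, and removing the $\delta$-separateness assumption on $Z_\delta$ via a standard coloring argument), the last of which you leave implicit.
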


Theorem  \ref{t:LinKakeyaPYZ} is essentially a special case of
\cite[Proposition 2.1]{2022arXiv220702259P} with parameters
$\alpha=\zeta=1$ and $E=[0,1]^2$ since $[0,1]^2$ is a
$(\delta,1;1)_1 \times (\delta,1;1)_1$ quasi-product in the
terminology of  \cite{2022arXiv220702259P}; see Remark 1 below
Theorem 1.7 therein. We briefly comment on the three points in
which our formulation differs slightly from the statement of
\cite[Proposition 2.1]{2022arXiv220702259P}:

\begin{enumerate}
\item \cite[Proposition 2.1]{2022arXiv220702259P} was formulated
for curves $\gamma$ in $S^2$. However, it is clear from the proof
that the argument works also for curves in $\mathbb{R}^3$
satisfying all the other assumptions. Indeed, the assumption
$\mathrm{span}\{\gamma(s),\dot{\gamma}(s),\ddot{\gamma}(s)\}=\mathbb{R}^3$
ensures that
\begin{equation}\label{eq:NonDeg}
    |\langle\gamma(s),z\rangle|+ |\langle\dot{\gamma}(s),z\rangle|
    + |\langle\ddot{\gamma}(s),z\rangle|\neq 0, \quad z\in \mathbb{R}^3 \setminus \{0\},\, s\in I.
\end{equation}
Then, for all $z,z'\in \mathbb{R}^3$, we have by compactness of
$S^2$ and $I$ that
\begin{displaymath}
|z-z'|\overset{\eqref{eq:NonDeg}}{\lesssim_{\gamma,I}} \min_{s\in
I}\sum_{k=0}^2 |\langle\gamma^{(k)}(s),z-z'\rangle|\leq
\sum_{k=0}^2 \sup_{s\in
I}|\langle\gamma^{(k)}(s),z-z'\rangle|\lesssim_{\|\gamma\|_{C^2(I)}}
|z-z'|.
\end{displaymath}
The remaining proof proceeds exactly as in
\cite{2022arXiv220702259P}. \item \cite[Proposition
2.1]{2022arXiv220702259P} was formulated for $Z_{\delta}\subset
B_E(0,1)$ instead of $Z_{\delta}\subset [-r,r]^3$, but this change
only influences the cinematic constant ``$K$'' of the family
$\mathcal{F}$ in the proof of  \cite[Proposition
2.1]{2022arXiv220702259P}, and hence the constant $\delta_0$.
\item  \cite[Proposition 2.1]{2022arXiv220702259P} was  formulated
with an additional $\delta$-separateness assumption for
$Z_{\delta}$. However, the version we stated above can easily be
reduced to this case since \eqref{eq:KatzTao} ensures that every
$\delta$-ball in $\mathbb{R}^3$ contains at most
$\delta^{-\varepsilon}$ elements of $Z_{\delta}$. Then a standard
coloring argument allows us to write $Z_{\delta}$ as a union of $M
\sim \delta^{-\varepsilon}$ sets
$Z_{\delta,1},\ldots,Z_{\delta,M}$
 such that each $Z_{\delta,i}$ is $\delta$-separated, see for instance \cite[p.101]{MR1800917}. Thus \cite[Proposition 2.1]{2022arXiv220702259P} can be applied to each set $Z_{\delta,i}$ individually, and the above version follows (with a larger constant $C$) by triangle inequality.
\end{enumerate}

\begin{remark}\label{r:Sep_a}
Condition \eqref{eq:KatzTao} is satisfied in our case thanks to a
non-concen{\-}tration condition for the coefficients ``$a$'' of
the corresponding functions $f_z(s)=\tfrac{a}{2}s^2+ bs +c$.
Indeed, assume that $Z_{\delta} \subset [-r,r]^3$ is such that
$|a-a'|\geq c_0\delta$ for all distinct $z=(a,b,c)$ and
$z'=(a',b',c')\in Z_{\delta}$ and a universal constant $c_0>0$.
This implies that $(a,b,c)\in Z_{\delta}\mapsto a\in A:=\{a\in
\mathbb{R}:\, (a,b,c)\in Z_{\delta}\text{ for some }b,c\in
\mathbb{R}\}$ is bijective and $A$ is a $c_0\delta$-separated set
in $\mathbb{R}$. Now if $B=B_E(z_0,\rho)\subset \mathbb{R}^3$ is a
Euclidean ball with $\rho\geq \delta$ and $a_0\in \mathbb{R}$
denotes the first coordinate of $z_0$, then clearly
\begin{displaymath}
\mathrm{card}(Z_{\delta}\cap B)\leq \mathrm{card}(A\cap
[a_0-\rho,a_0+\rho])\lesssim_{c_0,r} \frac{\rho}{\delta}.
\end{displaymath}
\end{remark}



\begin{cor}[Kakeya inequality for parabolas]\label{cor:LinKakeyaParabolas}
Let $r>0$, $c_0>0$ and $\delta_0>0$, and let $I\subset \mathbb{R}$
be a compact interval. Denote $\gamma(s):=(s^2/2,s,1)$. Let
$Z_{\delta} \subseteq  [-r,r]^3$ satisfy $\abs{a-a'}\geq
c_0\delta$ for all distinct $(a,b,c),(a',b',c')\in Z_{\delta}$ and
some $\delta\in (0,\delta_0)$. Set $\Gamma_z = \{(s,\langle
\gamma(s),z\rangle):\, s\in I\}$, $z\in Z_{\delta}$. Then for all
$\varepsilon>0$
\begin{equation*}
    \int_{[0,1]^2} \left(\sum_{z\in Z_{\delta}} \chi_{\Gamma_z^\delta}\right)^{3/2}
    \leq C_\varepsilon(c_0,\delta_0,r,I)\delta\cdot \delta^{-\varepsilon}\mathrm{card}(Z_{\delta}),
\end{equation*}
where $\Gamma_z^\delta$ is the Euclidean $\delta$-neighborhood of
$\Gamma_z$, and $C_\varepsilon(c_0,\delta_0,r,I)>0$ is a constant.
\end{cor}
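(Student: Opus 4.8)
The plan is to obtain the corollary directly from Theorem \ref{t:LinKakeyaPYZ}, applied to the fixed curve $\gamma(s)=(s^2/2,s,1)$ on the interval $I$, whose nondegeneracy \eqref{eq:NonDegGamma} has already been recorded. The separation hypothesis $|a-a'|\geq c_0\delta$ on $Z_\delta$ is precisely the situation analyzed in Remark \ref{r:Sep_a}, which shows that the map $(a,b,c)\mapsto a$ is injective on $Z_\delta$ and produces the non-concentration bound $\mathrm{card}(Z_\delta\cap B)\lesssim_{c_0,r}\rho/\delta$ for every Euclidean ball $B$ of radius $\rho\geq\delta$. Comparing with \eqref{eq:KatzTao}, this is stronger than required up to the multiplicative constant $C(c_0,r)$, and that constant is absorbed into the factor $\delta^{-\varepsilon'}$ as soon as $\delta\leq C(c_0,r)^{-1/\varepsilon'}$; so \eqref{eq:KatzTao} holds with parameter $\varepsilon'$ for all sufficiently small $\delta$.

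To match the exponent in the statement, I would keep track of the constant $C=C(\gamma,I)$ appearing in the conclusion $\int\leq\delta\cdot\delta^{-C\varepsilon'}\mathrm{card}(Z_\delta)$ of Theorem \ref{t:LinKakeyaPYZ}, and, given the target $\varepsilon>0$, apply the theorem with $\varepsilon':=\varepsilon/C$. This turns the bound into $\delta^{1-\varepsilon}\mathrm{card}(Z_\delta)$ and yields a threshold $\delta_0'=\delta_0'(\varepsilon,r,I)$. Intersecting this with the constraint $\delta\leq C(c_0,r)^{-1/\varepsilon'}$ from the previous step fixes a threshold $\delta_1=\delta_1(\varepsilon,c_0,r,I)$ below which the desired inequality holds with an implicit constant of the required dependence.

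It remains to cover the range $\delta\in[\delta_1,\delta_0)$, which is where the black-box theorem gives no information but which the statement insists on including for an arbitrary given $\delta_0$. Here I would argue crudely: injectivity of $(a,b,c)\mapsto a$ together with the $c_0\delta$-separation of the $a$-values in $[-r,r]$ gives $\mathrm{card}(Z_\delta)\lesssim_{c_0,r}\delta^{-1}$, so bounding the integrand by $\mathrm{card}(Z_\delta)^{3/2}$ over $[0,1]^2$ and using $\delta\geq\delta_1$ to convert the spare powers of $\delta$ into a constant finishes this regime. Taking the larger of the two constants produces the claimed $C_\varepsilon(c_0,\delta_0,r,I)$. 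The content here is light --- the theorem does all the real work --- so I expect the only genuine care to lie in this constant bookkeeping: reconciling the purely qualitative threshold $\delta_0'$ furnished by Theorem \ref{t:LinKakeyaPYZ} with the quantitative $\delta_0$ demanded in the statement, and making sure the constant $C(c_0,r)$ from Remark \ref{r:Sep_a} is swallowed by $\delta^{-\varepsilon'}$.
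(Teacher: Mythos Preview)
Your proposal is correct and follows essentially the same approach as the paper: apply Theorem \ref{t:LinKakeyaPYZ} (with the nondegeneracy from \eqref{eq:NonDegGamma} and the non-concentration from Remark \ref{r:Sep_a}, absorbing the constant $C(c_0,r)$ into $\delta^{-\varepsilon'}$) for $\delta$ below a threshold, and handle $\delta\in[\delta_1,\delta_0)$ by the crude bound $\mathrm{card}(Z_\delta)\lesssim_{c_0,r}\delta^{-1}$ together with $\int\leq\mathrm{card}(Z_\delta)^{3/2}$. Your bookkeeping with $\varepsilon'=\varepsilon/C$ is in fact a touch more explicit than the paper's, which simply applies the theorem with parameter $\varepsilon$ and implicitly relabels.
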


\begin{proof}
It follows from the discussion before and after Theorem
\ref{t:LinKakeyaPYZ}, that this theorem is applicable to the
specific curve $\gamma$ in the corollary. Fix $\varepsilon>0$
arbitrarily. Let $\delta_1(\varepsilon) =
\delta_1(\varepsilon,r,I)>0$ be such that the thesis of Theorem
\ref{t:LinKakeyaPYZ} holds for all $\delta\in
(0,\delta_1(\varepsilon))$. \par By Remark \ref{r:Sep_a} and the
assumptions on $Z_{\delta}$, we know that there is a constant
$C_0=C_0(r,c_0)>0$ such that
\begin{equation*}
    \mathrm{card}(Z_{\delta}\cap B)\leq C_0 \rho/\delta,
\end{equation*}
for all balls $B\subseteq \mathbb{R}^3$ of radius
$\rho\geq\delta$. Let $\delta_\varepsilon\in
(0,\delta_1(\varepsilon)]$ be such that $C_0\leq
\delta^{-\varepsilon}$ for all $\delta\in (0,\delta_\varepsilon)$.
If $\delta\in (0,\delta_\varepsilon)$, then $Z_{\delta}$ satisfies
all the assumptions of Theorem \ref{t:LinKakeyaPYZ}, yielding the
desired inequality. If $\delta\in [\delta_\varepsilon,\delta_0)$,
then $\mathrm{card}(Z_{\delta})\lesssim \max\{
r/(c_0\delta),1\}\lesssim_{\varepsilon,c_0,r,I}1$. Hence
\begin{align*}
\int_{[0,1]^2} \left(\sum_{z\in Z_{\delta}}
\chi_{\Gamma_z^\delta}\right)^{3/2} &\leq
\mathrm{card}(Z_{\delta})^{1/2+1} \lesssim_{\varepsilon, c_0, r,
I} \mathrm{card}(Z_{\delta}) \lesssim_{\varepsilon, c_0, r, I}
\delta^{1-\varepsilon}\mathrm{card}(Z_{\delta}). \qedhere
\end{align*}
\end{proof}

\section{Linear Kakeya inequality}\label{ss:LinKakeya}
It is well known that $L^p\to L^p$-bounds for the Euclidean Kakeya
maximal operators are equivalent to estimates on the
$L^{p'}$-interaction of Euclidean $\delta$-separated
$\delta$-tubes, where $p'=p/(p-1)$. We establish a similar result
(Proposition \ref{prop:LinKakeya_equiv_KakeyaMax}) for the
Heisenberg Kakeya maximal operator $M_{\delta}$, defined in
\eqref{eq:MaxOpDef}, and then prove Theorems
\ref{t:LinKakeyaIneqIntro} and  \ref{t:MainIntro}. The main
novelty related to the Heisenberg group lies in the proof of
Theorem \ref{t:LinKakeyaIneqIntro}. The proofs of Propositions
\ref{prop:ti_tubes_implies_KakeyaMax} and
\ref{prop:LinKakeya_equiv_KakeyaMax} follow almost verbatim their
Euclidean analogs as presented in \cite{MR3617376}. The most
obvious difference  is the fact that we consider
$\delta^2$-separated (Heisenberg) $\delta$-tubes, rather than
$\delta$-separated $\delta$-tubes. This mismatch of exponents
originates in the following lemma, see also Remark
\ref{rmk:exponent_mismatch}.
\begin{lemma}\label{l:KakeyaMax_small_angle_diff}
There exists a constant $c_1\in (0,1)$ such that

\begin{equation}\label{eq:l:KakeyaMax_small_angle_diff}
M_\delta f(e)\lesssim M_{2\delta} f(e'),
\end{equation}
for all $f\in L^1_{\mathrm{loc}}(\R^3)$, $\delta\in (0,1)$ and
$e,e'\in S^1$
with $\abs{e-e'}\leq c_1\delta^2$.
\end{lemma}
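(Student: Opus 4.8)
The plan is to reduce the maximal-function estimate to a single tube inclusion. Since $M_{\delta}f(e)$ is by definition the supremum over $y$ of the averages $|T_{\delta}(y,e)|^{-1}\int_{T_{\delta}(y,e)}|f|$, it suffices to produce, for each center $y$, a tube pointing in direction $e'$ that contains $T_{\delta}(y,e)$ and has comparable volume. I would keep the same center and aim for the inclusion
\begin{displaymath}
T_{\delta}(y,e)\subseteq T_{2\delta}(y,e').
\end{displaymath}
Granting this, Lemma \ref{l:vol} gives $|T_{\delta}(y,e)|\sim\delta^{3}\sim|T_{2\delta}(y,e')|$, so the average of $|f|$ over $T_{\delta}(y,e)$ is at most a universal constant times the average over $T_{2\delta}(y,e')$, which in turn is bounded by $M_{2\delta}f(e')$; taking the supremum over $y$ then yields \eqref{eq:l:KakeyaMax_small_angle_diff}.

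By left-invariance of $d_{\mathbb{H}}$, the inclusion reduces to a comparison of the core segments $I_{e}$ and $I_{e'}$. Concretely, if a point $x\in T_{\delta}(y,e)$ lies within $\delta$ of some $p\in y\cdot I_{e}$, and $p$ in turn lies within $\delta$ of $y\cdot I_{e'}$, then the triangle inequality places $x$ within $2\delta$ of $y\cdot I_{e'}$. Thus it is enough to bound the Hausdorff distance (in $d_{\mathbb{H}}$) between $I_{e}$ and $I_{e'}$ by $\delta$, and by left-invariance this equals the Hausdorff distance between $y\cdot I_{e}$ and $y\cdot I_{e'}$.

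The core computation is to estimate $d_{\mathbb{H}}((se,0),(se',0))=\|(se',0)^{-1}\cdot(se,0)\|_{\mathbb{H}}$ for matching parameters $s\in[-1/2,1/2]$, taking $(se',0)\in I_{e'}$ as the competitor for $(se,0)\in I_{e}$. A direct application of the group law shows that the horizontal part of $(se',0)^{-1}\cdot(se,0)$ is $s(e-e')$ while the vertical part is $\tfrac{1}{2}s^{2}(e_{1}e_{2}'-e_{2}e_{1}')$, where $|e_{1}e_{2}'-e_{2}e_{1}'|\leq|e-e'|$ since $e,e'$ are unit vectors. Feeding this into the Kor\'anyi norm yields
\begin{displaymath}
\|(se',0)^{-1}\cdot(se,0)\|_{\mathbb{H}}=|s|\bigl(|e-e'|^{4}+4(e_{1}e_{2}'-e_{2}e_{1}')^{2}\bigr)^{1/4}\lesssim|e-e'|^{1/2}.
\end{displaymath}
Hence, if $|e-e'|\leq c_{1}\delta^{2}$, this distance is $\lesssim\sqrt{c_{1}}\,\delta$, which is at most $\delta$ once $c_{1}$ is chosen small enough; the same bound holds with the roles of $e$ and $e'$ exchanged, giving the desired Hausdorff estimate.

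The step I expect to be the crux is the square-root scaling in the last display: because the vertical coordinate enters the Kor\'anyi norm at scale $x_{3}^{1/2}$, a perturbation of the direction of size $\delta^{2}$ moves the segment only by $\sim\delta$ in $d_{\mathbb{H}}$. This is precisely the anisotropy that makes $\delta^{2}$-separation (rather than $\delta$-separation) the natural notion for Heisenberg Kakeya tubes, cf.\ Remark \ref{rmk:exponent_mismatch}. Everything else is a routine combination of left-invariance, the triangle inequality, and the volume bound of Lemma \ref{l:vol}.
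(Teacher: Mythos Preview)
Your proof is correct and follows essentially the same route as the paper: establish the tube inclusion $T_{\delta}(y,e)\subseteq T_{2\delta}(y,e')$ via the estimate $d_{\mathbb{H}}((se,0),(se',0))\lesssim |e-e'|^{1/2}$ (using $|e_1e_2'-e_2e_1'|\le|e-e'|$), then conclude by the volume comparison of Lemma~\ref{l:vol}. The only cosmetic difference is that the paper first bounds $d_{\mathbb{H}}(e,e')$ and then scales by $|s|$ via the Heisenberg dilation, whereas you compute directly for general $s$.
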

\begin{proof} Throughout the proof, we denote by ``$e$'' both an element $e\in S^1$ and its embedding $(e,0)\in \mathbb{R}^2 \times \{0\}$ into $\mathbb{H}^1$. It is well known that there exists a constant $C>1$ such that \begin{equation}\label{eq:constC}
d_{\mathbb{H}}(e,e') \leq C \sqrt{|e-e'|},\quad e,e'\in S^1.
\end{equation}
This can be seen explicitly by considering arbitrary points
$e=(a,b),e'=(a',b')\in S^1$ and noting that
$\abs{ab'-ba'}=\abs{(a-a')b'-(b-b')a'}=|\langle
e-e',(b',-a')\rangle|\leq |e-e'|$  and so
\begin{equation*}
    d_\mathbb{H}(e,e')^4=\abs{e-e'}^4+16\abs{\tfrac{1}{2}[ab'-ba']}^2 \lesssim |e-e'|^2.
\end{equation*}

We set $c_1:=1/C^2$ for a fixed $C>1$ as in \eqref{eq:constC}, and
now show that we can cover a Heisenberg $\delta$-tube with another
Heisenberg $2\delta$-tube whenever their directions are $c_1
\delta^2$-close. That is, we claim that for each $e,e'\in S^1$,
$\delta>0$ and $y\in\Hei$ it holds
\begin{equation}\label{eq:tube_inclusion}
    \abs{e-e'}\leq c_1\delta^2\quad\Rightarrow\quad T^\delta(y,e)\subseteq T^{2\delta}(y,e').
\end{equation}

Let $e,e'\in S^1$ be such that $\abs{e-e'}\leq c_1 \delta^2$ and
note that $d_\mathbb{H}(e,e')\leq \delta$. Let
$y\in\Hei,T=T_\delta(y,e)$ and set $T':=T_{2\delta}(y,e')$. For
$x\in T$ there is $s_x\in \left[-\tfrac{1}{2},\tfrac{1}{2}\right]$
such that $d_\mathbb{H}(x,y\cdot s_xe)\leq\delta$. Since
$d_\mathbb{H}(y\cdot s_x e, y\cdot s_x e')
=\abs{s_x}d_\mathbb{H}(e,e')\leq \delta/2$, we have
\begin{equation*}
    \dist_\mathbb{H}{(x,y\cdot I_{e'})}\leq d_\mathbb{H}(x,y\cdot s_x e) + d_\mathbb{H}(y\cdot s_xe,y\cdot s_x e')
    \leq \delta + \delta/2
\end{equation*}
and so $T\subseteq T'$. We can finally prove
\eqref{eq:l:KakeyaMax_small_angle_diff}. If $T$ and $T'$ are as
before and $f\in L^1_\mathrm{loc}(\R^3)$, then
\begin{equation*}
    \frac{1}{\abs{T}}\int_{T}\abs{f}\leq \frac{1}{\abs{T}}\int_{T'}\abs{f}
        \lesssim M_{2\delta} f(e'),
\end{equation*}
and taking the supremum over $y\in\Hei$ yields the claim.
\end{proof}

We next consider the operator norm of the Heisenberg Kakeya
maximal operator, where $\|\cdot\|_{L^p(S^1)}$ is computed with
respect to the standard length measure $\sigma$ on $S^1$, and
$\|\cdot\|_{L^p(\mathbb{H}^1)}=\|\cdot\|_{L^p(\mathbb{R}^3)}$ with
the Lebesgue measure.
\begin{proposition}\label{prop:ti_tubes_implies_KakeyaMax}
Let $M>0$, $1<p<\infty$, $p'=p/(p-1)$ and $\delta\in(0,1)$.
Suppose that for all $t_1,\dots,t_m>0$ with
$\sum_{j=1}^mt_j^{p'}\delta^2\leq 1$ and Heisenberg $\delta$-tubes
$T_1,\dots, T_m$ in $\delta^2$-separated directions it holds
\begin{equation*}
    \norm{\sum_{j=1}^mt_j\chi_{T_j}}_{L^{p'}(\R^3)} \leq M.
\end{equation*}
Then $\norm{M_{\delta/2}}_{p\to p}\lesssim_p \delta^{-1}M$.
\end{proposition}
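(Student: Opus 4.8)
The plan is to establish the claimed maximal bound by a standard duality/linearization argument, adapting the Euclidean scheme from \cite{MR3617376} to the Heisenberg setting. First I would fix $f \in L^p(\mathbb{H}^1)$ and linearize the maximal operator: for each $e \in S^1$, choose a measurable selection $y(e) \in \mathbb{H}^1$ achieving (up to a factor $2$) the supremum in the definition of $M_{\delta/2}f(e)$, so that
\begin{equation*}
M_{\delta/2}f(e) \lesssim \frac{1}{|T_{\delta/2}(y(e),e)|} \int_{T_{\delta/2}(y(e),e)} |f|.
\end{equation*}
The goal is then to bound the $L^p(S^1)$-norm of the right-hand side. By duality, it suffices to control $\int_{S^1} M_{\delta/2}f(e)\, g(e)\, d\sigma(e)$ for an arbitrary nonnegative $g \in L^{p'}(S^1)$ with $\|g\|_{L^{p'}(S^1)} \leq 1$.

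Next I would discretize the direction circle $S^1$ into $\sim \delta^{-2}$ arcs of length $\sim \delta^2$, picking a representative direction $e_j$ in each arc $J_j$. The key step is to invoke Lemma \ref{l:KakeyaMax_small_angle_diff}: for any $e \in J_j$ we have $M_{\delta/2}f(e) \lesssim M_{\delta}f(e_j)$, since the directions $e$ and $e_j$ are $c_1\delta^2$-close. This replaces the continuum of directions by the $\delta^2$-separated finite family $\{e_j\}$, at the cost of passing from tubes of radius $\delta/2$ to tubes of radius $\delta$ — precisely the reason the hypothesis is stated for $\delta^2$-separated $\delta$-tubes. For each $j$ I select a tube $T_j = T_{\delta}(y_j, e_j)$ realizing $M_{\delta}f(e_j)$, and I set $t_j := \sigma(J_j)^{1/p'}\, |T_j|^{-1}\, \lambda_j$ where $\lambda_j \geq 0$ encodes the dual weight $g$ integrated over $J_j$. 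After these reductions the quantity to estimate becomes a sum $\sum_j t_j \int_{T_j} |f|$, which by Hölder's inequality is bounded by $\|f\|_{L^p(\mathbb{H}^1)} \,\|\sum_j t_j \chi_{T_j}\|_{L^{p'}(\mathbb{R}^3)}$.

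The hypothesis of the proposition now applies directly to the family $T_1,\dots,T_m$ in $\delta^2$-separated directions, giving $\|\sum_j t_j \chi_{T_j}\|_{L^{p'}} \leq M$, provided the normalization $\sum_j t_j^{p'}\delta^2 \leq 1$ holds. Verifying this normalization is where the bookkeeping lives: using $|T_j| \sim \delta^3$ from Lemma \ref{l:vol}, the factor $\delta^{-1}$ in the claimed bound $\|M_{\delta/2}\|_{p\to p} \lesssim_p \delta^{-1} M$ will emerge precisely from the ratio between the tube normalization $|T_j|^{-1} \sim \delta^{-3}$ and the measure of the direction arcs $\sigma(J_j) \sim \delta^2$. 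I would track the powers of $\delta$ carefully so that, after choosing the $t_j$ to satisfy the constraint $\sum_j t_j^{p'}\delta^2 \leq 1$ (which forces a normalization of the dual weight against $\|g\|_{L^{p'}} \leq 1$), the surviving prefactor is exactly $\delta^{-1} M$.

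The main obstacle I anticipate is not conceptual but rather the careful matching of exponents so that the $\delta^{-1}$ factor comes out correctly and the normalization $\sum_j t_j^{p'}\delta^2 \leq 1$ is genuinely satisfied by the chosen weights. The subtlety is that we average $M_{\delta/2}$ but apply the hypothesis at radius $\delta$ in $\delta^2$-separated directions; the interplay between the $\sim \delta^{-2}$ arcs, the tube volume $\sim \delta^3$, and the dual exponent $p'$ must be reconciled. Since Lemma \ref{l:KakeyaMax_small_angle_diff} absorbs the angular mismatch cleanly and the tube volumes are uniformly comparable by Lemma \ref{l:vol}, the argument otherwise follows the Euclidean template of \cite{MR3617376} essentially verbatim, as the authors note.
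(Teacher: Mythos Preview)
Your approach is essentially the same as the paper's: discretize $S^1$ at scale $\sim\delta^2$, use Lemma~\ref{l:KakeyaMax_small_angle_diff} to pass from $M_{\delta/2}$ to $M_\delta$, dualize (you via a test function $g\in L^{p'}(S^1)$, the paper via discrete $\ell^p$--$\ell^{p'}$ duality on the sum $\sum_j \delta^2 M_\delta f(e_j)^p$), choose near-extremal tubes, and apply H\"older followed by the hypothesis.

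There is one technical point you gloss over that the paper makes explicit. To invoke Lemma~\ref{l:KakeyaMax_small_angle_diff} at radius $\delta/2$ you need $|e-e_j|\leq c_1(\delta/2)^2$, so your arcs $J_j$ must have length at most $c_1(\delta/2)^2$; but then the representatives $\{e_j\}$ are only $c_1(\delta/2)^2$-separated, \emph{not} $\delta^2$-separated as the hypothesis demands (recall $c_1<1$). The paper resolves this by partitioning $\{1,\dots,m\}$ into $k\lesssim \delta^2/\bigl(c_1(\delta/2)^2\bigr)\lesssim 1$ index sets $J_1,\dots,J_k$ so that each $\{e_j:j\in J_i\}$ is genuinely $\delta^2$-separated, applying the hypothesis to each subfamily, and summing. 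This is a routine fix, but without it your claim that ``the hypothesis now applies directly'' is not quite justified.
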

\begin{proof}
Let $c_1\in (0,1)$ be as in Lemma
\ref{l:KakeyaMax_small_angle_diff}. Let $\{e_1,\dots,e_m\}\subset
S^1$ be a maximal $c_1(\delta/2)^2$-separated set. Let $f\in
L^p(\R^3)$ and note that $M_{\delta/2} f(e)\lesssim M_{\delta} f
(e_j)$ for each $e\in B_E(e_j,c_1(\delta/2)^2)$. We thus have
\begin{equation*}
\norm{M_{\delta/2} f}_{L^p(S^1)}^p = \int_{S^1}M_{\delta/2}
f(e)^p\,d\sigma(e) \leq
\sum_{j=1}^m\int_{B_E(e_j,c_1(\delta/2)^2)}M_{\delta/2}
f(e)^p\,d\sigma(e) \lesssim_p \sum_{j=1}^m\delta^2M_{\delta}
f(e_j)^p.
\end{equation*}
By a dual characterisation of the $\ell^p$-norm there are
$b_1,\dots, b_m\geq 0$ such that $\sum_{j=1}^mb_j^{p'}=1$ and
$\sum_{j=1}^m M_{\delta} f (e_j)^p = \left(\sum_{j=1}^m b_j
M_{\delta} f(e_j)\right)^p$; thus
\begin{equation}\label{eq:prop:ti_tubes_implies_KakeyaMax_1}
\norm{M_{\delta/2} f}_{L^p(S^1)} \lesssim_p
\delta^{2/p}\sum_{j=1}^m b_j M_{\delta} f(e_j).
\end{equation}
Let $T_1,\dots, T_m$ be Heisenberg $\delta$-tubes having
directions respectively $e_1,\dots, e_m$, satisfying
$\Barint_{T_j}\abs{f}\geq (1/2)M_{\delta}f (e_j)$. Then
\eqref{eq:prop:ti_tubes_implies_KakeyaMax_1} gives
\begin{align*}
\norm{M_{\delta/2} f}_{L^p(S^1)} &\lesssim_p
\delta^{2/p}\delta^{-3}\sum_{j=1}^mb_j\int_{T_j}\abs{f}
= \delta^{-1}\delta^{-2/{p'}}\int\sum_{j=1}^m b_j\chi_{T_j}\abs{f} \\
&\leq
\delta^{-1}\norm{\sum_{j=1}^m\delta^{-2/{p'}}b_j\chi_{T_j}}_{L^{p'}(\R^3)}\norm{f}_{L^p(\R^3)}.
\end{align*}
Note that $T_1,\dots, T_m$ are Heisenberg $\delta$-tubes, but have
directions which are only $c_1(\delta/2)^2$-separated, while we
need them to be $\delta^2$-separated. (Recall that $c_1\in(0,1)$.)
However, one can show that there are $J_1,\dots, J_k\subseteq
\{1,\dots,m\}$ with $k\lesssim
\delta^2/\big(c_1(\delta/2)^2\big)\lesssim 1$, such that $\{e_j :
j\in J_i\}$ is $\delta^2$-separated for each $i$, and
$\cup_{i=1}^k J_i = \{1,\dots, m\}$. Since $\sum_{j\in
J_i}(\delta^{-2/{p'}}b_j)^{p'}\delta^2\leq \sum_{j=1}^m
(\delta^{-2/{p'}}b_j)^{p'}\delta^2 = 1$, we have by assumption
that
\begin{align*}
\norm{M_{\delta/2} f}_{L^p(S^1)}
&\lesssim_p \delta^{-1}\sum_{i=1}^k\norm{\sum_{j\in J_i}\delta^{-2/{p'}}b_j\chi_{T_j}}_{L^{p'}(\R^3)}\norm{f}_{L^p(\R^3)} \\
&\leq \delta^{-1}Mk\norm{f}_{L^p(\R^3)} \lesssim
\delta^{-1}M\norm{f}_{L^p(\R^3)}.\qedhere
\end{align*}
\end{proof}
\begin{proposition}\label{prop:LinKakeya_equiv_KakeyaMax}
Let $M\geq 1$, $\beta\in\R$,  $1<p<\infty$, and $p'=p/(p-1)$. Then
the following are equivalent:
\begin{itemize}
    \item for all $\varepsilon>0$, $\delta\in (0,1)$ and every family $\calT$ of Heisenberg $\delta$-tubes
        having $\delta^2$-separated direction it holds
        \begin{equation}\label{eq:LinKakeya_equiv_KakeyaMax_1}
            \norm{\sum_{T\in\calT}\chi_{T}}_{L^{p'}(\R^3)}\lesssim_{p,\beta,\varepsilon}M\delta^{\beta-\varepsilon}(\delta^2\mathrm{card}(\calT))^{1/{p'}};
        \end{equation}
    \item for all $\varepsilon>0$ and $\delta\in (0,1)$ it holds
        \begin{equation}\label{eq:LinKakeya_equiv_KakeyaMax_2}
            \norm{M_\delta}_{p\to p}\lesssim_{p,\beta,\varepsilon} M \delta^{\beta-1-\varepsilon}.
        \end{equation}
\end{itemize}
\end{proposition}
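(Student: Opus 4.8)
The plan is to treat this as the Heisenberg counterpart of the classical equivalence between Kakeya maximal bounds and $L^{p'}$ overlap estimates for tubes, proving the two implications separately. For \eqref{eq:LinKakeya_equiv_KakeyaMax_2} $\Rightarrow$ \eqref{eq:LinKakeya_equiv_KakeyaMax_1} I would argue by duality against an $L^p$ test function, while for the reverse implication I would reduce weighted sums of tubes to sums of indicators by dyadic pigeonholing on the coefficients and then invoke Proposition \ref{prop:ti_tubes_implies_KakeyaMax}.

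For the direction \eqref{eq:LinKakeya_equiv_KakeyaMax_2} $\Rightarrow$ \eqref{eq:LinKakeya_equiv_KakeyaMax_1}, fix a family $\calT$ with $\delta^2$-separated directions $\{e_T : T\in\calT\}$ and use $L^p$--$L^{p'}$ duality to write $\|\sum_{T\in\calT}\chi_T\|_{L^{p'}} = \sup_{\|g\|_{L^p}\leq 1}\sum_{T\in\calT}\int_T|g|$. For each tube, the definition of $M_\delta$ together with Lemma \ref{l:vol} gives $\int_T|g|\leq|T|\,M_\delta g(e_T)\lesssim\delta^3 M_\delta g(e_T)$, so by H\"older over the (finite) index set $\sum_{T}\int_T|g|\lesssim\delta^3\,\mathrm{card}(\calT)^{1/p'}\big(\sum_T M_\delta g(e_T)^p\big)^{1/p}$. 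Because the $e_T$ are $\delta^2$-separated, the arcs $B_E(e_T,c_1\delta^2)\cap S^1$ (with $c_1$ as in Lemma \ref{l:KakeyaMax_small_angle_diff}) have bounded overlap and length $\sim\delta^2$, and on each of them $M_\delta g(e_T)\lesssim M_{2\delta}g(e')$ by \eqref{eq:l:KakeyaMax_small_angle_diff}; summing yields $\delta^2\sum_T M_\delta g(e_T)^p\lesssim\|M_{2\delta}g\|_{L^p(S^1)}^p$. Inserting assumption \eqref{eq:LinKakeya_equiv_KakeyaMax_2} for $M_{2\delta}$ and collecting powers---using $2-2/p=2/p'$---produces exactly the bound $M\delta^{\beta-\varepsilon}(\delta^2\,\mathrm{card}(\calT))^{1/p'}$.

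For \eqref{eq:LinKakeya_equiv_KakeyaMax_1} $\Rightarrow$ \eqref{eq:LinKakeya_equiv_KakeyaMax_2}, the goal is to verify the hypothesis of Proposition \ref{prop:ti_tubes_implies_KakeyaMax} at a scale comparable to $\delta$. Given weights $t_j>0$ with $\sum_j t_j^{p'}\delta^2\leq 1$ and tubes $T_j$ in $\delta^2$-separated directions, group them dyadically into $\calT_k=\{T_j : t_j\sim 2^{-k}\}$, so that $\|\sum_j t_j\chi_{T_j}\|_{L^{p'}}\leq\sum_k 2^{-k}\|\sum_{T\in\calT_k}\chi_T\|_{L^{p'}}$. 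Applying \eqref{eq:LinKakeya_equiv_KakeyaMax_1} to each $\calT_k$ and noting that the constraint forces $\mathrm{card}(\calT_k)\lesssim\delta^{-2}2^{kp'}$, each summand is $\lesssim M\delta^{\beta-\varepsilon}$; after discarding the negligible contribution of extremely small weights, only $O(\log(1/\delta))$ scales survive, and this logarithmic loss is absorbed into $\delta^{-\varepsilon}$. Thus the hypothesis of Proposition \ref{prop:ti_tubes_implies_KakeyaMax} holds with constant $\sim M\delta^{\beta-\varepsilon}$, and its conclusion gives $\|M_\delta\|_{p\to p}\lesssim M\delta^{\beta-1-\varepsilon}$ after renaming $\varepsilon$.

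The conceptual content is standard, so I expect the main obstacle to be the careful bookkeeping of exponents, and in particular keeping the $\delta^2$-separation (rather than the Euclidean $\delta$-separation) consistent across both conditions. This square on $\delta$ is exactly the Heisenberg feature: it enters through Lemma \ref{l:KakeyaMax_small_angle_diff}, where the bound $d_{\mathbb{H}}(e,e')\lesssim\sqrt{|e-e'|}$ for the Kor\'anyi metric turns a $\delta^2$-closeness of directions in $\mathbb{R}^2$ into $\delta$-closeness in $\mathbb{H}^1$. A secondary point requiring care is controlling the number of dyadic scales in the reverse implication so that the logarithmic loss genuinely fits inside the $\delta^{-\varepsilon}$ factor.
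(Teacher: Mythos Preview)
Your proposal is correct and follows essentially the same route as the paper: duality against a test function together with Lemma~\ref{l:KakeyaMax_small_angle_diff} for \eqref{eq:LinKakeya_equiv_KakeyaMax_2}$\Rightarrow$\eqref{eq:LinKakeya_equiv_KakeyaMax_1}, and dyadic pigeonholing on the weights (with an explicit threshold for discarding small $t_j$) plus Proposition~\ref{prop:ti_tubes_implies_KakeyaMax} for the converse. The only cosmetic differences are that the paper applies H\"older directly on $S^1$ rather than first on the finite index set, and it spells out the threshold $t_j\geq\delta^{3/p+\beta-1}$ to make the $O_{p,\beta}(\log(1/\delta))$ scale count and the $\|M_{\delta/2}\|$ versus $\|M_\delta\|$ bookkeeping explicit.
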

\begin{remark}
Note that for $\beta$ too large or too small inequality
\eqref{eq:LinKakeya_equiv_KakeyaMax_1} becomes respectively false
or trivial. For example, if $\beta>1/{p'}$, then
\eqref{eq:LinKakeya_equiv_KakeyaMax_1} is false, as can be seen
taking $\sim \delta^{-2}$ disjoint Heisenberg $\delta$-tubes;
while it follows from the triangle inequality if $\beta\leq
3/{p'}-2$. The same can be said about
\eqref{eq:LinKakeya_equiv_KakeyaMax_2}. We further comment on this
when we assess the sharpness of Theorem \ref{t:MainIntro} and
Theorem \ref{t:LinKakeyaIneqIntro}; see Remark
\ref{rmk:sharpness}.
\end{remark}
\begin{proof}
We start by showing that \eqref{eq:LinKakeya_equiv_KakeyaMax_1}
implies \eqref{eq:LinKakeya_equiv_KakeyaMax_2}. Note that
$\norm{M_\delta}_{p\to p}\lesssim_{p}1\lesssim_\beta
M\delta^{\beta-1-\varepsilon}$ for all $\delta\in [1/2,1)$ and
$\varepsilon>0$, so it is enough to show that
\eqref{eq:LinKakeya_equiv_KakeyaMax_2} holds for $\delta\in
(0,1/2)$ and $\varepsilon>0$. From Proposition
\ref{prop:ti_tubes_implies_KakeyaMax} it then suffices to prove
that for all $\delta\in (0,1)$, $t_1,\dots, t_m>0$ with
$\sum_{j=1}^mt_j^{p'}\delta^2\leq 1$ and $\delta^2$-separated
Heisenberg $\delta$-tubes $T_1,\dots,T_m$, it holds
\begin{equation}\label{eq:LinKakeya_equiv_KakeyaMax_3}
\norm{\sum_{j=1}^m t_j
\chi_{T_j}}_{L^{p'}(\mathbb{R}^3)}\lesssim_{p, \beta,\varepsilon}
M\delta^{\beta-\varepsilon}.
\end{equation}
Let $t_1,\dots, t_m$ and $T_1,\dots, T_m$ be as above. Suppose we
have proven \eqref{eq:LinKakeya_equiv_KakeyaMax_3} under the
additional assumption $t_j\geq \delta^{3/p+\beta-1}$ for all $j$.
Then
\begin{align*}
\norm{\sum_{j=1}^m t_j \chi_{T_j}}_{L^{p'}(\mathbb{R}^3)} &\leq
\norm{\sum_{j : t_j<\delta^{3/p+\beta-1}}
t_j\chi_{T_j}}_{L^{p'}(\mathbb{R}^3)}
+ \norm{\sum_{j : t_j\geq \delta^{3/p+\beta-1}} t_j\chi_{T_j}}_{L^{p'}(\mathbb{R}^3)} \\
&\lesssim_{p,\beta,\varepsilon}
\delta^{3/p+\beta-1}m\delta^{3/{p'}} + M\delta^{\beta-\varepsilon}
\lesssim \delta^{3/p-3+3/{p'}}\delta^\beta +
M\delta^{\beta-\varepsilon} \lesssim M\delta^{\beta-\varepsilon},
\end{align*}
where we have used $m\lesssim\delta^{-2}$ ($\delta^2$-separated
directions) and $M\delta^{-\varepsilon}\geq 1$ for all $\delta\in
(0,1)$. We can thus assume w.l.o.g.\ $\delta^{3/p+\beta-1}\leq
t_j\leq \delta^{-2/{p'}}$. (If $3/p+\beta-1<-2/{p'}$, then this
condition is vacuous and
\eqref{eq:LinKakeya_equiv_KakeyaMax_3} follows from the triangle inequality, as we have just seen.) \\
For each $k\in\Z$, let $J_k :=\{j : 2^{k-1}<t_j\leq 2^k\}$ and
note that $\mathrm{card}(\{k : J_k \neq
\varnothing\})\lesssim_{p,\beta} \log(1/\delta)+1$. Then from
\eqref{eq:LinKakeya_equiv_KakeyaMax_1} we get
\begin{equation*}
\norm{\sum_{j=1}^mt_j\chi_{T_j}}_{L^{p'}(\mathbb{R}^3)} \leq
\sum_{k\in\Z} 2^k\norm{\sum_{j\in
J_k}\chi_{T_j}}_{L^{p'}(\mathbb{R}^3)}
\lesssim_{p,\beta,\varepsilon}
M\delta^{\beta-\varepsilon/2}\delta^{2/{p'}}\sum_{k\in\Z}2^k(\mathrm{card}(J_k))^{1/{p'}}.
\end{equation*}
Since $2^{kp'}\mathrm{card}(J_k)\lesssim_p \sum_{j=1}^m
t_j^{p'}\leq \delta^{-2}$, it follows that
\begin{equation*}
    \sum_{k\in\Z}2^k(\mathrm{card}(J_k))^{1/{p'}}
    \lesssim_p \delta^{-2/{p'}}\mathrm{card}(\{k : J_k\neq \varnothing\})
    \lesssim_{p,\beta} \delta^{-2/{p'}}(\log(1/\delta)+1).
\end{equation*}
Finally, we have
\begin{equation*}
\norm{\sum_{j=1}^mt_j\chi_{T_j}}_{L^{p'}(\mathbb{R}^3)}
\lesssim_{p,\beta,\varepsilon}
M\delta^{\beta-\varepsilon/2}\delta^{2/{p'}}
\delta^{-2/{p'}}(\log(1/\delta)+1)
\lesssim_{\varepsilon}M\delta^{\beta-\varepsilon},
\end{equation*}
which concludes the proof of the first part of the statement. We
now show that \eqref{eq:LinKakeya_equiv_KakeyaMax_2} implies
\eqref{eq:LinKakeya_equiv_KakeyaMax_1}. We are assuming that
\eqref{eq:LinKakeya_equiv_KakeyaMax_2} holds for all $\delta\in
(0,1)$, but adjusting the implicit constant we see that it
actually holds also for $\delta\in (0,2)$. (We made a similar
remark at the beginning of this proof.) Let $\delta\in (0,1)$ and
$\calT$ be a family of Heisenberg $\delta$-tubes having
$\delta^2$-separated directions and let $e_T\in S^1$ denote the
direction of $T$, for each $T\in\calT$. Let $g\in L^p(\R^3)$,
$g\geq 0$, with $\norm{g}_{L^p(\mathbb{R}^3)}\leq 1$. Set $c_2 :=
\min\{c_1,1/2\}$, where $c_1\in (0,1)$ is as in Lemma
\ref{l:KakeyaMax_small_angle_diff}. Then
\begin{align*}
\int g\sum_{T\in\calT}\chi_T &\sim
\sum_{T\in\calT}\delta^3\Barint_Tg \leq
\sum_{T\in\calT}\delta^3M_\delta g (e_T)
\lesssim \delta^3\sum_{T\in\calT} \Barint_{B_E(e_T,c_2\delta^2)}M_{2\delta}g\,d\sigma \\
&\lesssim \delta\, \left[\sigma\left(\bigcup_{T\in\calT}
B_E(e_T,c_2\delta^2)\right)\right]^{1/{p'}}\norm{M_{2\delta}g}_{L^{p}(S^1)}
\lesssim_{p,\beta,\varepsilon}
M\delta^{\beta-\varepsilon}(\delta^2\mathrm{card}(\calT))^{1/{p'}}.
\end{align*}
Taking the supremum over the $g$ as above, we obtain
\eqref{eq:LinKakeya_equiv_KakeyaMax_1}.
\end{proof}
We are now ready to prove Theorem \ref{t:LinKakeyaIneqIntro},
which we restate. \LinKakeyaIneqIntro*
\begin{proof}[Proof of Theorem \ref{t:LinKakeyaIneqIntro}]
Let $S_j :=
\{(\cos\varphi,\sin\varphi):\abs{\varphi-j\pi/2}\leq\pi/4\}$ for
$j\in\{0,1,2,3\}$, and let $\calT_j\subseteq\calT$ be the set of
Heisenberg tubes in $\calT$ with direction in $S_j$. It is enough
to show that \eqref{eq:t:LinKakeyaIneq} holds in the case $\calT
=\calT_1$ or, equivalently, for $\calT=\calT_j$ for some $j$.
(Recall that $(x',x_3)\in\R^2\times\R \mapsto (Ox',x_3)$ is a
$(d_\mathbb{H},d_\mathbb{H})$-isometry for each $O\in SO(2)$.)
Indeed, we then have
\begin{equation*}
\norm{\sum_{T\in\calT}\chi_T}_{L^{3/2}(\mathbb{R}^3)}\leq\sum_{j=0}^3\norm{\sum_{T\in\calT_j}\chi_T}_{L^{3/2}(\mathbb{R}^3)}
\lesssim_\varepsilon\sum_{j=0}^3
(\delta^{3-\varepsilon}\mathrm{card}(\calT_j))^{2/3} \lesssim
(\delta^{3-\varepsilon}\mathrm{card}(\calT))^{2/3}.
\end{equation*}
We can now assume $\calT = \calT_1$. Let $\calB$ be a collection
of Heisenberg balls of radius $1/2$ covering $\R^3$ and such that
the balls with same centers and four times larger radius have
absolutely bounded overlaps:
$$
\sup_{x\in \mathbb{H}^1}\mathrm{card}\left(\{B\in \mathcal{B}:\,
x\in 4B\}\right)\lesssim 1.
$$
If \eqref{eq:t:LinKakeyaIneq} holds with the left-hand side having
integration domain an arbitrary $B\in\calB$, we then have
\begin{align*}
\int\left(\sum_{T\in\calT}\chi_T\right)^{3/2}
&\sim\sum_{B\in\calB}
    \int_B\left(\sum_{T\in\calT : T\cap B \neq \varnothing}\chi_T\right)^{3/2}
    \lesssim_\varepsilon \sum_{B\in\calB} \delta^{3-\varepsilon}
        \mathrm{card}\left(\{T\in\calT : T\cap B \neq \varnothing\}\right) \\
&=\delta^{3-\varepsilon}\sum_{T\in\calT}\mathrm{card}\left(\{B\in\calB:T\cap
B \neq\varnothing\}\right) \lesssim
\delta^{3-\varepsilon}\mathrm{card}(\calT),
\end{align*}
as $\mathrm{card}\left(\{B\in\calB : T\cap
B\neq\varnothing\}\right)$ is bounded by an absolute constant
since $T_{\delta}(y,e)\cap B \neq \varnothing$ implies that $y\in
4B$.

We can then focus on a single ball $B\in\calB$ and assume that
$T\cap B \neq\varnothing$ for all $T\in\calT$. Moreover, by left
translation, we can  assume $B$ to be centred at
$\left(0,\tfrac{1}{2},\tfrac{1}{2}\right)$; this ensures that
$\pi_\mathbb{W}(B)\subseteq [0,1]^2$, where we have identified
$\mathbb{W}=\{0\}\times\R^2$ with $\R^2$. \par By Lemma
\ref{l:tube_parab} we know that for each $T\in\calT$ there is a
Euclidean $\sim \delta^2$-neighborhood $P_T$ of a parabola arc
such that $\pi_\mathbb{W}(T)\subseteq P_T$ and so $\chi_T(w\cdot
l)\leq \chi_{P_T}(w)$ for all $w\in\mathbb{W}$ and
$l\in\mathbb{L}=\mathbb{R}\times \{(0,0)\}$. Also, the inclusion
$\pi_\mathbb{W}(B)\subseteq [0,1]^2$ gives $\chi_B(w\cdot l)\leq
\chi_{[0,1]^2}(w)$. Thus
\begin{align}\label{eq:t:LinKakeyaIneq_1}
\int_B \left(\sum_{T\in\calT}\chi_T\right)^{3/2}
&\overset{\eqref{eq:Fubini}}{=}
\int_{\mathbb{W}}\int_{\mathbb{L}}\left(\sum_{T\in\calT}\chi_T(w\cdot
l)\right)^{1/2+1}\chi_B(w\cdot l)
    \, dl\,dw \nonumber \\
&\leq
\int_{[0,1]^2}\left(\sum_{T\in\calT}\chi_{P_T}(w)\right)^{1/2}
    \sum_{T\in\calT}\calH^1\big(T\cap \pi_{\mathbb{W}}^{-1}(\{w\})\big)\,dw \nonumber \\
&\overset{\mathrm{Lem.}\ref{l:tube_cap_line_lesssim_delta}}{\lesssim}
    \delta\,\int_{[0,1]^2}\left(\sum_{T\in\calT}\chi_{P_T}(w)\right)^{3/2}\,dw,
\end{align}
where in the last line we have also used the fact that $T\cap
\pi_{\mathbb{W}}^{-1}(\{w\}) = \varnothing$ if $w\notin P_T$ and
so $\calH^1\big(T\cap \pi_{\mathbb{W}}^{-1}(\{w\})\big)\lesssim
\delta\chi_{P_T}(w)$. \par To conclude the proof, we show that we
can apply Corollary \ref{cor:LinKakeyaParabolas} to the parabola
neighborhoods $\{P_T : T\in\calT\}$. To do so, we employ Lemma
\ref{l:tube_parab} and the formulae of \eqref{eq:(a,b,c)}. \par
For each $T\in\calT$ there are a compact interval $I$ and
$(a,b,c)\in\R^3$ (both depending on $T$) such that $P_T$ is a
neighborhood of the parabola arc $\gamma_{(a,b,c)}(I)$. Since
$\calT=\calT_1$ we know that $a\in [-1,1]$ and so there is an
absolute constant $C_0>0$ such that $P_T$ is contained in the
$C_0\delta^2$-neighborhood of its ``core curve''
$\gamma_{(a,b,c)}(I)$. (Recall that $P_T$ is the $\sim
(1+\abs{a})\delta^2\sim\delta^2$-neighborhood of
$\gamma_{(a,b,c)}(I)$.) Also, $T\cap B\neq \varnothing$ and
$\delta\in (0,1)$ imply that there is an absolute constant $r\geq
1$ such that $b,c\in [-r,r]$ and $I\subseteq [-r,r]$. \par Let
$Z\subseteq [-r,r]^3$ be the set $(a,b,c)\in\R^3$ corresponding to
$\calT$. Note that $a\in [-1,1]\mapsto \frac{(a,1)}{\abs{(a,1)}}$
is Lipschitz continuous; since $\calT$ is a family of tubes in
$\delta^2$-separated directions, it follows that there is an
absolute constant $c_0>0$ such that $\abs{a-a'}\geq c_0
(C_0\delta^2)$ for all distinct $(a,b,c),(a',b',c')\in Z$. Thus,
Corollary \ref{cor:LinKakeyaParabolas} applies to $Z$ with
$C_0\delta^2$ in place of $\delta$. Taking $\delta_0=C_0$ in
Corollary \ref{cor:LinKakeyaParabolas}, its conclusion holds for
$C_0\delta^2\in (0,C_0)$. Hence, for $\delta\in (0,1)$ and each
$\varepsilon>0$,
\begin{equation}\label{eq:t:LinKakeyaIneq_2}
\int_{[0,1]^2}\left(\sum_{T\in\calT}\chi_{P_T}\right)^{3/2} \leq
\int_{[0,1]^2}\left(\sum_{z\in
Z}\chi_{\Gamma^{C_0\delta^2}_z}\right)^{3/2} \leq
C_\varepsilon\delta^{2-\varepsilon}\card{(Z)},
\end{equation}
where $\Gamma^{C_0\delta^2}_z$ is the $C_0\delta^2$-neighborhood
the parabola arc $\Gamma_z = \gamma_z([-r,r])$. Since
$\mathrm{card}(Z)=\mathrm{card}(\calT)$,
\eqref{eq:t:LinKakeyaIneq_2} and \eqref{eq:t:LinKakeyaIneq_1}
conclude the proof.
\end{proof}
We can finally prove Theorem \ref{t:MainIntro}. \MainIntro*
\begin{proof}[Proof of Theorem \ref{t:MainIntro}]
Theorem \ref{t:LinKakeyaIneqIntro} shows that
\eqref{eq:LinKakeya_equiv_KakeyaMax_1} holds with $p'=3/2$, $M=1$
and $\beta=2/3$. Thus, Proposition
\ref{prop:LinKakeya_equiv_KakeyaMax} immediately gives
$\norm{M_\delta}_{3\to 3}\lesssim_\varepsilon
\delta^{2/3-1-\varepsilon} = \delta^{-1/3-\varepsilon}$ for all
$\varepsilon>0$ and $\delta\in (0,1)$.
\end{proof}

The exponent $p'=3/2$ in Theorem \ref{t:LinKakeyaIneqIntro} is the
best possible as explained in Remark \ref{rmk:sharpness}.

\begin{remark}\label{rmk:exponent_mismatch}
As mentioned in the introduction to this section, a peculiarity of
these results is that they involve $\delta^2$-separated Heisenberg
$\delta$-tubes, rather than $\delta$-separated ones. There are
several reasons for this:
\begin{enumerate}
    \item
         Lemma \ref{l:KakeyaMax_small_angle_diff}
        is based on the implication \eqref{eq:tube_inclusion},
        which does not hold if $e,e'$ are only $\sim\delta$-close.
        To see this, consider $e=\frac{1}{\sqrt{1+\delta^2}}(1,\delta)$ and $e'=(1,0)$.
        Then $|e-e'|\sim \delta$ and $x=(\tfrac{1}{2\sqrt{1+\delta^2}},\tfrac{\delta}{2\sqrt{1+\delta^2}},0)\in T_{\delta}(0,e)$,
        yet $x\notin T_{2\delta}(0,e')$ if $\delta$ is small enough, since
        \begin{displaymath}
            d_{\mathbb{H}}(x,(se,0))\gtrsim \left|\tfrac{1}{2\sqrt{1+\delta^2}}-s\right|+
            \sqrt{\tfrac{\delta |s|}{2\sqrt{1+\delta^2}}} \gg \delta
        \end{displaymath}
        for all sufficiently small $\delta>0$ (uniformly in $s\in[-1/2,1/2]$).
    \item Heisenberg $\delta$-tubes (roughly speaking) project to $\sim\delta^2$-neighborhoods of parabolas
        (Lemma \ref{l:tube_parab}) while the $\delta$-separation is preserved.
        In order to apply Corollary \ref{cor:LinKakeyaParabolas}, we need the separation to be at least of the same order as the radius of the neighborhoods.
    \item Theorem \ref{t:LinKakeyaIneqIntro} applies to the sparser $\delta$-separated $\delta$-tubes.
        However, the usual argument would not yield the sharp lower bound on the Minkowski dimension if we only considered $\delta$-separated
        $\delta$-tubes in Theorem \ref{t:LinKakeyaIneqIntro}. See Remark \ref{rmk:Dimension_conclusionMinkowski}.

\end{enumerate}
\end{remark}

\section{Conclusion about Hausdorff dimension}\label{ss:Dim}

We recall the following definition from \cite{MR4439466}.
\begin{definition}\label{d:HeisKakeya}
We say that $E\subseteq \Hei$ is a \emph{Heisenberg Kakeya set} 
if for every $e\in S^1$ there is a $y\in\Hei$ such that $y\cdot
I_e\subseteq E$, where $I_e = \{(se,0):s\in[-1/2,1/2]\}$ is a
horizontal unit line segment.
\end{definition}
Since the projection  $\pi:(x_1,x_2,x_3)\mapsto (x_1,x_2)$ maps a
Heisenberg Kakeya set $E$ onto a Kakeya set in $\mathbb{R}^2$, it
follows from the validity of the Kakeya conjecture in
$\mathbb{R}^2$ that $E$ must have Euclidean Hausdorff dimension at
least $2$. However, since $\pi$ can increase Hausdorff dimension
in the metric $d_{\mathbb{H}}$, the dimension of $E$ with respect
to this metric has  to be studied separately. Liu showed in
\cite{MR4439466} that Heisenberg Kakeya sets in $\mathbb{H}^1$
have Hausdorff dimension at least $3$ with respect to
$d_{\mathbb{H}}$. Theorem \ref{t:MainIntro} provides an
alternative proof of this result (Proposition \ref{p:Dimension
conclusion}). For illustration, we first explain how the linear
Heisenberg Kakeya inequality can be used to prove the sharp bound
for the lower \emph{Minkowski (box-counting) dimension} of
Heisenberg Kakeya sets in $\mathbb{H}^1$ (Remark
\ref{rmk:Dimension_conclusionMinkowski}). This is a weaker
statement since the lower Minkowski dimension of a set is always
greater than its Hausdorff dimension. Nevertheless, it provides a
good heuristic for the relation between the numerology of Theorem
\ref{t:LinKakeyaIneqIntro} and the dimension of Heisenberg Kakeya
sets (see also Remark \ref{rmk:exponent_mismatch}).

\begin{remark}\label{rmk:Dimension_conclusionMinkowski}
Let $E^\delta$ be the $\delta$-enlargement w.r.t.\ $d_\mathbb{H}$
of a Heisenberg Kakeya set $E$. We show that
$\abs{E^\delta}\gtrsim_\varepsilon \delta^{1+\varepsilon}=
\delta^{4-3+\varepsilon}$ for all $\varepsilon>0$ and $\delta\in
(0,1)$; this implies by a standard argument that $E$ has lower
Minkowski dimension (w.r.t.\ $d_\mathbb{H}$) at least $3$. Fix a
$\delta^2$-separated set of directions in $S^1$, and for any one
of them consider the unit segment contained in $E$ having such
direction. The Heisenberg $\delta$-neighborhood of any such
segment is a Heisenberg $\delta$-tube which is contained in
$E^\delta$. Let $\calT$ denote the collection of such tubes; it is
a $\delta^{2}$-separated family of Heisenberg $\delta$-tubes.
Hence
\begin{equation*}
\big\|\sum_{T\in \mathcal{T}}\chi_T \big\|_{L^1(\mathbb{R}^3)}
\leq \left(|\bigcup_{T\in \mathcal{T}} T|\right)^{\frac{1}{3}}\,
\big\|\sum_{T\in
\mathcal{T}}\chi_T\big\|_{L^{3/2}(\mathbb{R}^3)}
\overset{\text{Thm.}
\ref{t:LinKakeyaIneqIntro}}{\lesssim_\varepsilon}
\delta^{-\varepsilon} \left(|\bigcup_{T\in \mathcal{T}}
T|\right)^{\frac{1}{3}}\, \big\|\sum_{T\in
\mathcal{T}}\chi_T\big\|_{L^{1}(\mathbb{R}^3)}^{\frac{2}{3}},
\end{equation*}
which finally implies
\begin{equation}\label{eq:Dimension_conclusionMinkowski}
\abs{E^\delta}\geq |\bigcup_{T\in \mathcal{T}} T|
\gtrsim_\varepsilon \delta^\varepsilon \big\|\sum_{T\in
\mathcal{T}}\chi_T\big\|_{L^{1}(\mathbb{R}^3)} \sim
\delta^{3+\varepsilon}\card{(\calT)}\sim \delta^{1+\varepsilon}.
\end{equation}
We stress that had we proven Theorem \ref{t:LinKakeyaIneqIntro}
only for $\delta$-separated $\delta$-tubes, we would have the
weaker $\abs{E^\delta}\gtrsim_\varepsilon
\delta^{2+\varepsilon}=\delta^{4-2+\varepsilon}$ in
\eqref{eq:Dimension_conclusionMinkowski}. This would only give the
non-sharp lower bound $\underline{\dim}_M E\geq 2$.
\end{remark}

We reprove Liu's result as a corollary of Theorem
\ref{t:MainIntro}:

\begin{proposition}\label{p:Dimension conclusion}
If $E\subseteq \mathbb{H}^1$ is a Kakeya set of horizontal unit
line segments, then $\dim_H E \geq 3$.
\end{proposition}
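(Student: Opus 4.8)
The plan is to run the standard Bourgain-type deduction of a Hausdorff dimension bound from a Kakeya maximal inequality, here powered by Theorem~\ref{t:MainIntro}. Arguing by contradiction, I would suppose $\dim_H E < 3$ and fix $t$ with $\dim_H E < t < 3$; then $\mathcal{H}^t(E)=0$, so for every $\eta>0$ there is a cover of $E$ by Kor\'anyi balls $\{B(x_j,r_j)\}_j$ with $\sum_j r_j^t < \eta$. After rounding radii to dyadic values $r_j = 2^{-k}$ and grouping the balls into families $\mathcal{B}_k$ of cardinality $N_k$, this reads $\sum_k N_k 2^{-kt} < \eta$. For each direction $e \in S^1$ there is, by hypothesis, a unit horizontal segment $\sigma_e = y_e \cdot I_e \subseteq E$, and $(\sigma_e, d_{\mathbb{H}})$ is isometric to $([0,1],|\cdot|)$; the cover restricts to a cover of $\sigma_e$.

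Next I would perform two pigeonholes against the convergent series $\sum_k (k+1)^{-2}$. First, since the scale-$2^{-k}$ balls cover portions of $\sigma_e$ whose $\mathcal{H}^1$-lengths sum (by subadditivity) to at least $1$, for each $e$ there is a scale $k(e)$ at which the covered portion has length $\gtrsim (k(e)+1)^{-2}$. Second, pigeonholing the map $e \mapsto k(e)$ over $S^1$ yields a single scale $k$ and a set $\Omega = \{e : k(e)=k\}$ of directions with $\sigma(\Omega) \gtrsim (k+1)^{-2}$. Writing $\delta = 2^{-k}$, this gives $\sigma(\Omega) \gtrsim (\log(1/\delta))^{-2}$ and, for every $e \in \Omega$, a subsegment of $\sigma_e$ of length $\ell \gtrsim (k+1)^{-2}$ covered by balls of $\mathcal{B}_k$.

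The test function is $f = \chi_{G}$ with $G = \bigcup_{j \in \mathcal{B}_k} 2B_j$, so that $\|f\|_{L^3}^3 = |G| \lesssim N_k \delta^4$. The crucial geometric step is the matching lower bound $M_\delta f(e) \gtrsim \ell$ on $\Omega$: taking a maximal $\delta$-separated set $\{p_i\}$ of $m \gtrsim \ell/\delta$ points inside the covered subsegment of $\sigma_e$, each $p_i$ lies in some $B_j \in \mathcal{B}_k$, whence $B(p_i,\delta/2) \subseteq 2B_j \subseteq G$, and also $B(p_i,\delta/2) \subseteq T_\delta(y_e,e)$ because $p_i$ sits on the tube's core. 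These balls are disjoint and have volume $\sim \delta^4$ by Ahlfors $4$-regularity, so $|G \cap T_\delta(y_e,e)| \gtrsim m\delta^4 \gtrsim \ell \delta^3$; dividing by $|T_\delta(y_e,e)| \sim \delta^3$ (Lemma~\ref{l:vol}) gives $M_\delta f(e)\gtrsim \ell$. Feeding the resulting bounds $\|M_\delta f\|_{L^3(S^1)}^3 \geq \ell^3 \sigma(\Omega) \gtrsim (k+1)^{-8}$ and $\|M_\delta f\|_{L^3(S^1)}^3 \lesssim_\varepsilon \delta^{-1-3\varepsilon}|G| \lesssim N_k \delta^{3-3\varepsilon}$ (the latter from Theorem~\ref{t:MainIntro}) into each other produces $N_k \gtrsim_\varepsilon (k+1)^{-8}\, 2^{k(3-3\varepsilon)}$.

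Finally I would combine this with $N_k < \eta\, 2^{kt}$ to obtain $(k+1)^{-8}\, 2^{k(3-3\varepsilon-t)} \lesssim_\varepsilon \eta$. Choosing $\varepsilon = (3-t)/6$ makes the exponent $3-3\varepsilon-t = (3-t)/2$ strictly positive, so the left-hand side is bounded below by a positive constant $c(t)$ independent of the cover, and hence of $\eta$; thus $\eta \gtrsim_t c(t)$, contradicting the freedom to take $\eta$ arbitrarily small. This forces $\dim_H E \geq t$ for every $t<3$, i.e. $\dim_H E \geq 3$. The step I expect to be the main obstacle is the geometric packing estimate $|G\cap T_\delta(y_e,e)|\gtrsim \ell\delta^3$ — in particular justifying the enlargement to $2B_j$ so that definite-volume balls sit simultaneously inside $G$ and inside the Heisenberg tube — while verifying that the two logarithmic pigeonhole losses are harmless thanks to the polynomial gap $3-t>0$.
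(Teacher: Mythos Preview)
Your proposal is correct and follows essentially the same approach as the paper: both run the standard Bourgain deduction from the maximal inequality of Theorem~\ref{t:MainIntro}, using the test function $\chi$ of the doubled scale-$k$ balls and the same packing argument (maximal $\delta$-separated points on the covered portion of $\sigma_e$, disjoint Kor\'anyi $\delta/2$-balls sitting simultaneously in the tube and in the enlarged cover) to obtain the lower bound on $M_\delta f$. The only cosmetic difference is that you pigeonhole twice to isolate a single scale $k$ and derive a contradiction, whereas the paper keeps all scales, bounds each $\sigma(S_k)$, and sums to show $\sum_j r_j^{3-6\varepsilon}\gtrsim 1$ directly; these are interchangeable presentations of the same argument.
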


\begin{proof} With Theorem \ref{t:MainIntro} at hand, this
follows by a standard argument (found e.g.\ in \cite{MR2003254} or
\cite{MR3617376}). Let $E\subseteq \mathbb{H}^1$ be a Heisenberg
Kakeya set and fix $0<\alpha<3$. Let $B_j=B(x_j,r_j)$ be an
arbitrary cover of $E$ by Kor\'{a}nyi balls of radius $r_j \leq
1$. It suffices to show that $\sum_{j} r_j^{\alpha}\gtrsim 1$. For
each $e\in S^1$, we let $\ell_e={x_e}\cdot I_e$ denote a line
segment contained in $E$. As usual, we group the covering sets
into families of balls of comparable size by defining
\begin{displaymath}
J_k:= \{j:\, 2^{-k}\leq r_j < 2^{1-k}\},\quad k\in \mathbb{N}^+,
\end{displaymath}
and the set of directions in which a line segment is well covered
by balls of comparable radius,
\begin{displaymath}
S_k:=\left\{e:\, \mathcal{H}^1(\ell_e \cap \bigcup_{j\in
J_k}B_j)\geq \frac{1}{2k^2}\right\}.
\end{displaymath}
The sets $S_k$, $k\in \mathbb{N}^+$, cover all relevant
directions. If there was $e\in S^1 \setminus \bigcup_{k\in
\mathbb{N}^+} S_k$, then
\begin{displaymath}
1 = \mathcal{H}^1(E\cap \ell_e) \leq \sum_{k=1}^{\infty}
\frac{1}{2k^2}<1,
\end{displaymath}
which is impossible.

We will apply the Heisenberg Kakeya maximal operator to the
function
\begin{displaymath}
f=\chi_{F_k},\quad \text{where }F_k=\bigcup_{j\in J_k}
B(x_j,2r_j).
\end{displaymath}
However, we firstly need to show that for $e\in S_k$
\begin{equation}\label{eq:Heisenberg_need_Fubini}
    \abs{T_{2^{-k}}(x_e,e)\cap F_k}\gtrsim \frac{1}{k^2}\abs{T_{2^{-k}}(x_e,e)}.
\end{equation}
Fix $e\in S_k$ and set
    $I_k(e) := \bigcup_{j\in J_k} B_j\cap \ell_e$.
Note that for each $z\in I_k(e)$ there is a $j\in J_k$ such that 
$B(z,2^{-k})\subseteq B(x_j,2r_j)$. Thus,
\begin{equation}\label{eq:Heisenberg_no_need_Fubini_1}
    \bigcup_{z\in I_k(e)} B(z,2^{-k}) \subseteq T_{2^{-k}}(x_e,e)\cap\bigcup_{j\in J_k}B(x_j,2r_j) = T_{2^{-k}}(x_e,e)\cap F_k.
\end{equation}
We now estimate $\abs{T_{2^{-k}}(x_e,e)\cap F_k}$ in terms of
$\calH^1(I_k(e))$. Let $P\subseteq I_k(e)$ be a maximal
$2^{-(k-1)}$-separated set in $I_k(e)$ (with respect to
$d_\mathbb{H}$), then $\{B(y,2^{-k}) : y\in P\}$ is a pairwise
disjoint family of sets and so
\begin{equation}\label{eq:Heisenberg_no_need_Fubini_2}
\abs{\bigcup_{z\in I_k(e)} B(z,2^{-k})} \geq \abs{\bigcup_{y\in P}
B(y,2^{-k})} \sim(2^{-k})^4\mathrm{card}(P).
\end{equation}
On the other hand, $\{B(y,2^{-(k-1)})\cap (x_e\cdot \spa{(e)}) :
y\in P\}$ covers $I_k(e)$ and so
\begin{equation}\label{eq:Heisenberg_no_need_Fubini_3}
    2^{-(k-1)} \mathrm{card}(P)\gtrsim \calH^1(I_k(e)).
\end{equation}
Combining the inclusion \eqref{eq:Heisenberg_no_need_Fubini_1} and
the inequalities \eqref{eq:Heisenberg_no_need_Fubini_2},
\eqref{eq:Heisenberg_no_need_Fubini_3}, we have
\begin{equation*}
    \abs{T_{2^{-k}}(x_e,e)\cap F_k} \gtrsim (2^{-k})^4\mathrm{card}(P) \gtrsim (2^{-k})^3 \calH^1(I_k(e))
    \gtrsim \frac{1}{k^2}\abs{T_{2^{-k}}(x_e,e)},
\end{equation*}
as desired. This then implies that $M_{2^{-k}}f\gtrsim k^{-2}$ on
$S_k$ and so
\begin{equation}\label{eq:Max1}
    \norm{M_{2^{-k}}f}_{L^3}\gtrsim k^{-2}\sigma(S_k)^{1/3}.
\end{equation}
On the other hand, Theorem \ref{t:MainIntro} (for suitably chosen
$\varepsilon>0$ to be determined) implies that
\begin{equation}\label{eq:Max2}
\|M_{2^{-k}} f\|_{L^3} \leq C_{\varepsilon}\,
2^{k\varepsilon}\,2^{k/3} \|f\|_{L^3(\mathbb{R}^3)}
\lesssim_{\varepsilon} 2^{k\varepsilon}\,2^{k/3}
\left(\mathrm{card}(J_k)\,2^{(1-k)4}\right)^{1/3}\lesssim_{\varepsilon}
2^{k(\varepsilon-1)} \mathrm{card}(J_k)^{1/3}.
\end{equation}
Combining \eqref{eq:Max1} and \eqref{eq:Max2} yields
\begin{displaymath}
\sigma(S_k) \lesssim k^6\,
2^{3k(\varepsilon-1)}\,\mathrm{card}(J_k) \lesssim_\varepsilon
2^{3k(2\varepsilon-1)}\,\mathrm{card}(J_k).
\end{displaymath}
This shows that
\begin{displaymath}
\sum_j r_j^{3-6\varepsilon}\gtrsim \sum_{k=1}^{\infty}
\mathrm{card}(J_k) 2^{3k(2\varepsilon-1)} \gtrsim_{\varepsilon}
\sum_{k=1}^{\infty} \sigma(S_k)\gtrsim 1.
\end{displaymath}
Hence, if $3-6\varepsilon>\alpha$, we get the desired lower bound
$\sum_j r_j^{\alpha} \gtrsim 1$.
\end{proof}
\begin{remark}\label{rmk:sharpness}
Slight changes to the proof of Proposition \ref{p:Dimension
conclusion} show that $\norm{M_\delta}_{p\to
p}\lesssim_{\varepsilon}\delta^{-\beta-\varepsilon}$ for all
$\delta\in (0,1)$ and $\varepsilon>0$ implies $\mathrm{dim}_H E
\geq 4-p\beta$ for all Heisenberg Kakeya sets $E$. Since
$\mathrm{dim}_H E\geq 3$ is sharp, the strongest bound of this
form  we can have for $M_\delta$ is $\norm{M_\delta}_{p\to
p}\lesssim_\varepsilon \delta^{-1/p-\varepsilon}$. By Proposition
\ref{prop:LinKakeya_equiv_KakeyaMax}, this is equivalent to
\begin{equation}\label{eq:rmk:sharpness_1}
    \norm{\sum_{T\in\calT}\chi_T}_{p'}\lesssim_\varepsilon \delta^{1/p'-\varepsilon}(\delta^2\card{(\calT)})^{1/p'}
\end{equation}
for all collections $\calT$ of Heisenberg $\delta$-tubes pointing
in $\delta^2$-separated directions. But if $\calT$ is a collection
of $\sim \delta^{-2}$ Heisenberg $\delta$-tubes centred at the
origin, we have
\begin{equation*}
    \norm{\sum_{T\in\calT}\chi_T}_{p'}\gtrsim_p \delta^{-2}\delta^{4/p'}.
\end{equation*}
Thus \eqref{eq:rmk:sharpness_1} is false unless $p\geq 3$ (or,
equivalently, $p'\leq 3/2$). This shows that the integrability
exponent $p=3$ and the power of $\delta$ in Theorem
\ref{t:MainIntro} are the smallest possible, i.e.\ it is the
strongest $L^p\to L^p$-bound possible (except possibly for the
$C_\varepsilon\delta^{-\varepsilon}$ term). By Proposition
\ref{prop:LinKakeya_equiv_KakeyaMax}, also Theorem
\ref{t:LinKakeyaIneqIntro} is sharp in an analogous sense.
\end{remark}
\begin{remark}\label{rmk:interpolation}
As in the Euclidean case, Theorem \ref{t:MainIntro} and the
trivial bound $\norm{M_\delta}_{\infty\to\infty}\leq 1$ imply by
(real) interpolation (see e.g.\ \cite[Theorem 2.13]{MR3617376} or
\cite[Corollary 1.4.22]{MR3243734}) that
\begin{equation*}
    \norm{M_\delta}_{p\to p}\lesssim_{p,\varepsilon}\delta^{-1/p-\varepsilon}, \quad  p\in [3,\infty].
\end{equation*}
\end{remark}
\begin{remark}
With the same exponent of $\delta$, Theorem
\ref{t:LinKakeyaIneqIntro} with the exponent $3/2$ replaced by any
exponent strictly larger than $1$ would still yield the sharp
lower bound of Proposition \ref{p:Dimension conclusion} (and, of
course, of Remark \ref{rmk:Dimension_conclusionMinkowski}).

\end{remark}
\bibliographystyle{plain}
\bibliography{references}

\end{document}